\newtheorem{thm}{Theorem}[section]
\newtheorem{lem}[thm]{Lemma}
\newtheorem{cor}[thm]{Corollary}
\theoremstyle{definition}
\newtheorem{rem}[thm]{Remark}
\numberwithin{equation}{section} 
\numberwithin{figure}{section}
\numberwithin{table}{section}
\newcommand{\uh}{\mathrm{h}}
\newcommand{\Vol}{\mathrm{Vol}}
\newcommand{\diag}{\mathop{\mathrm{diag}}}
\newcommand{\D}{\mathsf{D}}
\newcommand{\V}{\mathsf{V}}
\renewcommand{\div}{\mathrm{div}}
\begin{document}

\title{Mixed volumes and the Bochner Method}
\thanks{This work was supported in part by NSF grants CAREER-DMS-1148711
and DMS-1811735, ARO through PECASE award W911NF-14-1-0094,
and the Simons Collaboration on Algorithms \& Geometry.
This work was initiated while the authors were in residence at 
MSRI in Berkeley, CA, supported by NSF grant DMS-1440140.
The hospitality of MSRI and of the organizers of the program on Geometric 
Functional Analysis is gratefully acknowledged.}

\author{Yair Shenfeld}
\address{Sherrerd Hall 323, Princeton University, Princeton, NJ
08544, USA}
\email{yairs@princeton.edu}

\author{Ramon van Handel}
\address{Fine Hall 207, Princeton University, Princeton, NJ 
08544, USA}
\email{rvan@princeton.edu}

\begin{abstract}
At the heart of convex geometry lies the observation that the volume of 
convex bodies behaves as a polynomial. Many geometric inequalities may be 
expressed in terms of the coefficients of this polynomial, called mixed 
volumes. Among the deepest results of this theory is the 
Alexandrov-Fenchel inequality, which subsumes many known inequalities as 
special cases. The aim of this note is to give new proofs of the 
Alexandrov-Fenchel inequality and of its matrix counterpart, Alexandrov's 
inequality for mixed discriminants, that appear conceptually and 
technically simpler than earlier proofs and clarify the underlying 
structure. Our main observation is that these inequalities can be reduced 
by the spectral theorem to certain trivial ``Bochner formulas''.
\end{abstract}

\subjclass[2000]{52A39;	
		 52A40; 
		 58J50} 

\keywords{Mixed volumes; mixed discriminants; Alexandrov-Fenchel inequality;
Bochner method; hyperbolic quadratic forms; convex geometry}

\maketitle

\thispagestyle{empty}

\section{Introduction and main ideas}
\label{sec:intro}

Much of the foundation for the modern theory of convex geometry was put 
forward by 
H.~Minkowski around the turn of the 20th century. One of the central 
notions in Minkowski's theory arises from the fundamental fact that the 
volume of convex bodies in $\mathbb{R}^n$ behaves as a homogeneous 
polynomial of degree $n$: that is, for any convex bodies 
$K_1,\ldots,K_m\subset\mathbb{R}^n$ and $\lambda_1,\ldots,\lambda_m>0$, we 
have
\begin{equation}
\label{eq:defmv}
	\Vol(\lambda_1K_1+\cdots+
	\lambda_mK_m) =
	\sum_{i_1,\ldots,i_n=1}^m
	\V(K_{i_1},\ldots,K_{i_n})\,
	\lambda_{i_1}\cdots\lambda_{i_n}.
\end{equation}
The coefficients $\V(K_{i_1},\ldots,K_{i_n})$ of this polynomial are 
called \emph{mixed volumes}. Given this observation, it seems 
natural to expect that many geometric properties of convex bodies may be 
expressed in terms of relations between mixed volumes. This viewpoint 
plays a major role in Minkowski's work on convex geometry \cite{Min11}, 
and lies at the heart of what is now called the Brunn-Minkowski theory 
\cite{BF87,Sch14}. Among the deepest results of this theory is the 
Alexandrov-Fenchel inequality, which subsumes many geometric inequalities 
as special cases.

\begin{thm}[Alexandrov-Fenchel inequality]
\label{thm:af}
We have
$$
	\V(K,L,C_1,\ldots,C_{n-2})^2 \ge
	\V(K,K,C_1,\ldots,C_{n-2})\,\V(L,L,C_1,\ldots,C_{n-2})
$$
for any convex bodies $K,L,C_1,\ldots,C_{n-2}$ in $\mathbb{R}^n$.
\end{thm}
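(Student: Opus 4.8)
The plan is to reduce the Alexandrov-Fenchel inequality to a spectral statement about a self-adjoint elliptic operator on the sphere, and then to establish that statement directly by means of a Bochner-type integration by parts. Since each of $\V(K,L,C_1,\dots,C_{n-2})$, $\V(K,K,C_1,\dots,C_{n-2})$ and $\V(L,L,C_1,\dots,C_{n-2})$ depends continuously on the bodies in the Hausdorff metric, and since smooth convex bodies with strictly positive curvature are dense, I would first assume that all the bodies are of this kind (in particular full-dimensional, so that the relevant mixed volumes are strictly positive). For such bodies one has the classical representation
$$
n\,\V(K_1,\dots,K_n)=\int_{S^{n-1}}h_{K_1}\,\D(A_{K_2},\dots,A_{K_n})\,d\sigma,\qquad
A_K:=\nabla^2 h_K+h_K\,g,
$$
where $h_K$ is the support function, $g$ and $\nabla^2$ are the round metric and Hessian on $S^{n-1}$, and $\D(\cdot,\dots,\cdot)$ denotes the mixed discriminant of endomorphisms of the $(n-1)$-dimensional tangent spaces --- the polarization of the determinant, in exact parallel with the way $\V$ polarizes the volume. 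The hypotheses make each $A_K$ a positive-definite field.

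Fixing $C_1,\dots,C_{n-2}$, and writing $A_v:=\nabla^2 v+v\,g$ for an arbitrary smooth function $v$ on $S^{n-1}$, I would consider the bilinear form
$$
\mathsf Q(u,v):=\int_{S^{n-1}}u\,\D\bigl(A_v,A_{C_1},\dots,A_{C_{n-2}}\bigr)\,d\sigma
$$
on $C^\infty(S^{n-1})$. This form is symmetric --- this is the symmetry of mixed volumes, and can be seen directly from an integration by parts on the sphere --- so $\mathsf Q(u,v)=\langle u,\mathcal L v\rangle_{L^2(\sigma)}$ for a self-adjoint elliptic second-order operator $\mathcal L=\mathcal L_{C_1,\dots,C_{n-2}}$, and by construction $n\,\V(K,L,C_1,\dots,C_{n-2})=\mathsf Q(h_K,h_L)$. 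Since $\mathsf Q(h_K,h_K)=n\,\V(K,K,C_1,\dots,C_{n-2})>0$ and likewise for $L$, the reverse Cauchy-Schwarz inequality reduces the theorem to the single claim that $\mathsf Q$ is negative semidefinite on a subspace of codimension one, i.e.\ that $\mathcal L$ has at most one positive eigenvalue.

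To prove this I would induct on the dimension $n$. When $n=2$ there are no bodies $C_i$ and $\mathsf Q(u,u)=\int_{S^1}\bigl(u^2-(u')^2\bigr)\,d\theta$, which by Wirtinger's inequality is $\le 0$ whenever $u$ is orthogonal to the constants --- the prototypical ``trivial Bochner formula''. For the inductive step I would peel off one reference body, diagonalizing the curvature field $A_{C_{n-2}}$ pointwise and invoking the identity that expresses a mixed discriminant with a rank-one slot as a lower-dimensional mixed discriminant, so as to write $\mathsf Q(u,u)$ as a superposition of $(n-1)$-dimensional quadratic forms of the inductively understood type. Because a superposition of Lorentzian forms need not be Lorentzian, the heart of the matter --- and the step I expect to be the main obstacle --- is to recombine this superposition: using an integration by parts on $S^{n-1}$ that trades the Hessian of $u$ against the divergence of a suitably chosen vector field, and in which the Ricci curvature of the sphere enters exactly as the Weitzenb\"ock correction term, one should be able to exhibit $\mathsf Q(u,u)$, for $u$ in an explicit codimension-one subspace, as a manifestly nonpositive ``energy'' plus terms controlled by the inductive hypothesis. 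Identifying the correct vector field, the correct subspace, and the sign of every cross term produced by the integration by parts is the only genuinely delicate point; granted this Bochner formula, the remainder of the argument is routine.
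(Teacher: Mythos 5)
Your reduction is exactly the paper's: represent $n\,\V(K,L,C_1,\ldots,C_{n-2})$ as a symmetric quadratic form $\mathsf Q(h_K,h_L)$ of an elliptic self-adjoint operator, note that the two ``diagonal'' mixed volumes are positive, and invoke the reverse Cauchy--Schwarz characterization (Lemma \ref{lem:hyper}) to reduce Theorem \ref{thm:af} to the claim that the operator has at most one positive eigenvalue. Up to that point there is nothing to object to.

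The gap is that the claim itself is never proved: your inductive step is stated as a hope (``one should be able to exhibit $\mathsf Q(u,u)$ \ldots as a manifestly nonpositive energy plus terms controlled by the inductive hypothesis''), and the step you defer --- recombining a superposition of Lorentzian forms, which as you note need not be Lorentzian --- is precisely where the entire difficulty of the Alexandrov--Fenchel inequality lives; no choice of vector field, subspace, and sign bookkeeping in an integration by parts is exhibited, and it is not clear one exists in the form you describe (the Ricci term you anticipate only materializes in the special case $C_1=\cdots=C_{n-2}=B_2$). Moreover, after diagonalizing $A_{C_{n-2}}$ pointwise and using the rank-one reduction, the lower-dimensional objects you obtain are mixed discriminants of Hessian fields, not quadratic forms of the type $\mathsf Q$ on a lower-dimensional sphere, so the induction as formulated does not close. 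The paper resolves exactly this point by a different device: normalize the operator so that $\mathscr{A}h_{C_1}=h_{C_1}$ (so the top eigenvalue is $1$, simple, with positive eigenfunction, by ellipticity/Perron--Frobenius), apply Alexandrov's mixed discriminant inequality (Theorem \ref{thm:a}) \emph{pointwise} to bound $(\mathscr{A}f)^2$ from below, and integrate, using the symmetry of the form to swap $f$ and $h_{C_1}$; this gives $\langle\mathscr{A}f,\mathscr{A}f\rangle\ge\langle f,\mathscr{A}f\rangle$ (Lemma \ref{lem:lich}), hence every eigenvalue satisfies $\lambda^2\ge\lambda$, i.e.\ $\lambda\le 0$ or $\lambda=1$, which is the spectral claim. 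In other words, the recombination you identify as the obstacle is done for free by the spectral theorem once the operator is correctly normalized; the only induction on dimension needed is hidden in the proof of the finite-dimensional matrix inequality (Theorem \ref{thm:a}), or, if one works with polytopes as in Section \ref{sec:app}, an induction of the shape you envisage does go through --- but there too the key step is the normalized-operator trick, not a Bochner-type integration by parts. As it stands, your proposal reformulates the theorem correctly but leaves its essential content unproved.
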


The cases $n=2,3$ are special in that they can be derived from the 
Brunn-Minkowski inequality, as was already shown by Minkowski himself 
\cite[p.~261]{Min11}. However, this approach only yields special cases of 
Theorem \ref{thm:af} in higher dimension. A (questionable) proof of 
Theorem \ref{thm:af} was announced, but never published, by W.~Fenchel 
\cite{Fen36}. Finally, two different but closely related proofs were 
obtained by A.\,D.~Alexandrov \cite{Ale37,Ale38} using a homotopy method 
due to Hilbert \cite{Hil12}. It was realized much later that Theorem 
\ref{thm:af} has connections with algebraic geometry through the Hodge 
index theorem, which led to the development of algebraic and complex 
geometric proofs \cite{BZ88,Gro90,Wan18}. Despite these diverse 
viewpoints, the inequality and its proofs are generally considered to be 
conceptually deep. We refer to \cite{Sch14,AGM15} for further remarks on 
the history and significance of Theorem \ref{thm:af}.

The aim of this note is to give a new proof of the Alexandrov-Fenchel 
inequality that appears to be conceptually and technically simpler than 
previous proofs. The basic ingredients of our proof were already 
introduced by Minkowski, Hilbert, and Alexandrov. However, by means of a 
very simple but apparently overlooked device, we will replace the main 
part of Alexandrov's proof by a one-line computation. We believe the 
resulting approach is particularly intuitive and sheds new light on why 
the inequality holds. In the remainder of the introduction we describe the 
basic elements of our proof; the details are filled in in subsequent 
sections.

\subsection{Mixed volumes and mixed discriminants}
\label{sec:1mix}

Mixed volumes are defined by considering the volume of the 
sum $K+L:=\{x+y:x\in K,y\in L\}$ of convex bodies. We would like to 
think of volume as a polynomial on the space of convex bodies. However, 
this is somewhat awkward, as convex bodies do not form a vector 
space. To address this issue, we identify each convex body $K$ with its 
\emph{support function}
$$
	h_K(x) := \sup_{y\in K}\langle y,x\rangle.
$$
Geometrically, $h_K(x)$ is the distance to the origin of the supporting 
hyperplane of $K$ whose normal direction is $x\in S^{n-1}$. As $K$ can be 
recovered by intersecting all its supporting halfspaces, $h_K$ and $K$ 
uniquely determine each other.

The advantage of working with support functions is that they map 
set addition into scalar addition: $h_{aK+bL}=ah_K+bh_L$.
To understand the behavior of volume under 
addition, it is therefore natural to express
$\Vol(K)$ in terms of $h_K$: we have
\begin{equation}
\label{eq:volrep}
	\Vol(K) = \frac{1}{n}\int_{S^{n-1}}
	h_K\det(D^2h_K)\,d\omega,
\end{equation}
where $\omega$ denotes the surface measure on $S^{n-1}$ and $D^2h_K(x)$ 
denotes the restriction of the Hessian of $h_K:\mathbb{R}^n\to 
\mathbb{R}$ to the tangent space of $S^{n-1}$ at the point $x$
(this classical computation is recalled in section \ref{sec:2.1}). With 
this representation in hand, it is immediately clear that volume is a 
polynomial in the sense of \eqref{eq:defmv}: the integrand in 
\eqref{eq:volrep} is a polynomial of degree $n$ in $h_K$ in the usual 
sense (as $D^2h_K$ is an $(n-1)$-dimensional matrix), and the conclusion 
follows directly.

\begin{rem}
\label{rem:c2}
As written, the representation \eqref{eq:volrep} only makes sense for 
smooth convex bodies, that is, when $h_K$ is a $C^2$ function on 
$S^{n-1}$. However, any convex body can be approximated by smooth bodies, 
and mixed volumes are continuous with respect to this approximation 
\cite[\S 27--\S 29]{BF87}. We therefore can and will assume in the sequel 
that all convex bodies are sufficiently smooth.
\end{rem}

We can similarly represent mixed volumes in terms of support functions. 
As mixed volumes are defined as the coefficients of the polynomial 
\eqref{eq:defmv}, we must first define the analogous coefficients of 
the determinant: that is, for any $(n-1)$-dimensional matrices
$M_1,\ldots,M_m$ and $\lambda_1,\ldots,\lambda_m>0$, we define 
\begin{equation}
\label{eq:defmd}
	\det(\lambda_1M_1+\cdots+
	\lambda_mM_m) =
	\sum_{i_1,\ldots,i_{n-1}=1}^m
	\D(M_{i_1},\ldots,M_{i_{n-1}})\,
	\lambda_{i_1}\cdots\lambda_{i_{n-1}}.
\end{equation}
The coefficients $\D(M_{i_1},\ldots,M_{i_{n-1}})$ are
called \emph{mixed discriminants}. Following a similar argument to
the proof of \eqref{eq:volrep}, we obtain the following representation:
\begin{equation}
\label{eq:mvrep}
	\V(K_1,\ldots,K_n) = \frac{1}{n}
	\int_{S^{n-1}} h_{K_1}\D(D^2h_{K_2},\ldots,
	D^2h_{K_n})\,d\omega.
\end{equation}
It is important to note that mixed volumes are, by definition, symmetric
in their arguments, even though this is not obvious from
the representation \eqref{eq:mvrep}. For this reason \eqref{eq:mvrep}
does not follow trivially from \eqref{eq:volrep}. However, one can prove
\eqref{eq:mvrep} by a small modification of the proof of \eqref{eq:volrep}, 
as we will recall in section \ref{sec:2.1} below.

Now that we obtained a natural representation of mixed volumes, how might
one go about proving Theorem \ref{thm:af}? In view of 
\eqref{eq:mvrep}, one may ask first whether there is an analogue of
Theorem \ref{thm:af} for mixed discriminants. This is indeed the case.

\begin{thm}[Alexandrov's mixed discriminant inequality] 
\label{thm:a}
Let $A$ be any $(n-1)$-dimensional symmetric matrix, and let
$B,M_1,\ldots,M_{n-3}$ be $(n-1)$-dimensional
positive semidefinite matrices. Then we have
$$
	\D(A,B,M_1,\ldots,M_{n-3})^2 \ge
	\D(A,A,M_1,\ldots,M_{n-3})\,\D(B,B,M_1,\ldots,M_{n-3}).
$$
\end{thm}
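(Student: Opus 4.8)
The plan is to realize the mixed discriminant as a quadratic form on symmetric matrices and prove that this form has exactly one positive eigenvalue (a hyperbolicity/Lorentzian-signature statement), which is well known to be equivalent to the stated inequality. Concretely, fix positive semidefinite $M_1,\ldots,M_{n-3}$ — by a density argument one may assume they are positive definite — and define the symmetric bilinear form
$$
	\lla X,Y\rra := \D(X,Y,M_1,\ldots,M_{n-3})
$$
on the space of $(n-1)$-dimensional symmetric matrices. The inequality asserts that $\lla A,B\rra^2 \ge \lla A,A\rra\,\lla B,B\rra$ whenever $B$ is positive semidefinite. Since $B\succeq 0$ can be written $B = CC^{\mathsf{T}}$, and since a diagonal change of coordinates reduces the general case to $B$ being (a scalar multiple of) a fixed reference matrix, the heart of the matter is to show: for a suitable choice of reference positive definite matrix $B_0$, one has $\lla B_0,B_0\rra > 0$, and the form $\lla\cdot,\cdot\rra$ is negative semidefinite on the $\lla\cdot,B_0\rra$-orthogonal complement of $B_0$. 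This ``reverse Cauchy–Schwarz'' is the standard consequence of the quadratic form having Lorentzian signature $(+,-,\ldots,-,0,\ldots,0)$.

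The key computational step — and this is where the ``Bochner formula'' device of the paper enters — is to choose coordinates that trivialize the form. After a congruence $X\mapsto S^{\mathsf{T}}XS$ one may assume $M_1 = \cdots = M_{n-3}$ is the identity is \emph{not} quite enough because several distinct $M_i$'s remain; instead I would proceed by induction on the number of ``free'' slots, using the polarization identity for mixed discriminants to reduce to the case where all the $M_i$'s coincide. In that base case, $\D(X,Y,I,\ldots,I)$ is, up to a positive constant, $\mathrm{tr}(X)\,\mathrm{tr}(Y) - \mathrm{tr}(XY)$ — a completely explicit quadratic form on symmetric matrices whose signature is immediate from the spectral theorem: diagonalizing $X$, the form becomes $\sum_{i\ne j} x_i x_j = (\sum_i x_i)^2 - \sum_i x_i^2$, manifestly of signature $(1,n-2)$ (one plus, $n-2$ minuses) on the space of diagonal matrices, with the off-diagonal entries contributing only negative directions. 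This is the promised one-line computation.

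The main obstacle is the inductive step: passing from ``all $M_i$ equal'' to ``arbitrary positive definite $M_i$'' while controlling the signature. The clean way is to run a continuity/connectedness argument in the spirit of Hilbert's homotopy method: deform $(M_1,\ldots,M_{n-3})$ through positive definite matrices to $(I,\ldots,I)$; the relevant eigenvalue count of $\lla\cdot,\cdot\rra$ can only change when an eigenvalue crosses zero, and one shows this cannot happen by verifying that $\lla\cdot,\cdot\rra$ is nondegenerate on the whole space of symmetric matrices for every positive definite choice of the $M_i$'s (equivalently, that $\D(X,\cdot,M_1,\ldots,M_{n-3}) \equiv 0$ forces $X=0$, which follows from the fact that $Y\mapsto\D(X,Y,M_1,\ldots,M_{n-3})$ recovers, via the identity $\D(X,M_2,\ldots,M_{n-1})$ with one slot varying, enough of $X$ to conclude). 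Combined with the base-case signature count, this pins the signature to $(1,\binom{n-1}{2}+n-2,0) = (1, N-1, 0)$ with $N$ the dimension of the symmetric matrices, and the reverse Cauchy–Schwarz inequality — hence Theorem \ref{thm:a} — follows. I expect the nondegeneracy lemma to be the one genuinely technical point; everything else is either the trivial Bochner-type identity or standard linear algebra of hyperbolic forms.
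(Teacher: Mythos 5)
Your overall framing (hyperbolicity of the form $\lla X,Y\rra=\D(X,Y,M_1,\ldots,M_{n-3})$, explicit signature in the base case $M_i=I$ via $\mathrm{tr}(X)\mathrm{tr}(Y)-\mathrm{tr}(XY)$) is correct, and the reduction to $B$ positive definite and then to a fixed reference matrix by congruence is fine. But the route you then take is the classical Hilbert--Alexandrov homotopy method, and it has a genuine gap exactly at the point you defer: the nondegeneracy of $\lla\cdot,\cdot\rra$ for \emph{every} positive definite choice of $M_1,\ldots,M_{n-3}$, which you need so that no eigenvalue crosses zero along the deformation to $(I,\ldots,I)$. The parenthetical argument you offer (``$\D(X,\cdot,M_1,\ldots,M_{n-3})\equiv 0$ forces $X=0$ because the form recovers enough of $X$'') is not a proof. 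Concretely, testing against rank-one matrices $Y=vv^*$ and using the reduction $\D(X,vv^*,M_1,\ldots,M_{n-3})=\tfrac{1}{n-1}\D(X^{(v)},M_1^{(v)},\ldots,M_{n-3}^{(v)})$ (compression to $v^\perp$), the hypothesis only says that certain compressed mixed discriminants vanish for all $v$; to conclude $X=0$ from this one needs either the inequality in the same dimension (circular) or the characterization of its equality cases in lower dimension --- e.g., triviality of the kernel is immediate from ``equality with $Y,M_i>0$ forces $X=\lambda Y$,'' but that equality theorem is itself a nontrivial result that must be carried along the induction. In the classical literature this nondegeneracy/kernel analysis is precisely the hard part of the homotopy proof, not ``standard linear algebra.'' (Also, your earlier suggestion to reduce distinct $M_i$'s to equal ones by polarization cannot work as stated: signature is not preserved under the linear combinations that polarization produces, which is presumably why you switch to the homotopy.)

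By contrast, the paper avoids this issue entirely: it proves Theorem \ref{thm:a} by induction on the dimension, not by deforming the $M_i$'s. In dimension $n$ it first treats diagonal $Z$ by building the normalized matrix $A$ with $(Ay)_i$ a ratio of compressed mixed discriminants, so that $A\mathbf{1}=\mathbf{1}$, $A$ is symmetric in a weighted inner product and has positive entries; the $(n-1)$-dimensional inequality then gives the Bochner-type bound $\langle Ay,Ay\rangle_{\ell^2(p)}\ge\langle y,Ay\rangle_{\ell^2(p)}$, so every eigenvalue satisfies $\lambda^2\ge\lambda$, and Perron--Frobenius pins the simple top eigenvalue at $1$; Lemma \ref{lem:hyperfull} ($3\Rightarrow1$, then $2\Rightarrow1$ after diagonalizing the second argument) finishes the general case. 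No nondegeneracy or equality-case input is needed anywhere. If you want to salvage your homotopy argument you must actually prove the nondegeneracy lemma (essentially Alexandrov's kernel characterization); otherwise I would recommend restructuring along the paper's inductive Bochner/Perron--Frobenius scheme, which replaces that technical point by a one-line computation.
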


Theorem \ref{thm:a} is a matrix inequality and does not necessarily belong 
to convex geometry.
Given this inequality, it might seem that the Alexandrov-Fenchel 
inequality should be a simple consequence of Theorem \ref{thm:a} and the 
representation \eqref{eq:mvrep}. This is far from clear, however. Had the 
inequality signs in Theorems \ref{thm:af} and \ref{thm:a} been reversed, 
then the former would follow directly from the latter by the 
Cauchy-Schwarz inequality. However, the inequalities being such as they 
are, Cauchy-Schwarz goes in the wrong direction and there is no reason to 
expect, \emph{a priori}, that Theorem \ref{thm:a} should imply Theorem 
\ref{thm:af}.

Theorem \ref{thm:a} was in fact used by Alexandrov in one part of his 
study of the Alexandrov-Fenchel inequality. However, in this 
proof Theorem \ref{thm:a} is used very indirectly, and the relationship 
between Theorems \ref{thm:af} and \ref{thm:a} has remained somewhat 
mysterious. Indeed, many other inequalities are known for mixed 
discriminants, but most such inequalities are simply false in the context 
of mixed volumes (e.g., \cite{AFO14}).

The new observation of this note is that when viewed in the right way, 
the Alexandrov-Fenchel inequality will prove to be a \emph{direct} 
consequence of Alexandrov's inequality for mixed discriminants. This not 
only yields a simpler proof, but also demystifies the relationship 
between Theorems \ref{thm:af} and \ref{thm:a}. We believe this 
conceptual simplification significantly clarifies the structure of 
these inequalities. Once the basic idea has been understood, we will 
find that the same idea can be used to give a simple new proof of 
Theorem \ref{thm:a}.

\subsection{Hyperbolic inequalities}
\label{sec:1hyper}

Before we can explain the main idea of this note, we must recall the 
basic structure behind the Alexandrov-Fenchel 
inequalities. By definition, mixed volumes and mixed discriminants are 
symmetric multilinear functions of their arguments. Therefore, Theorems 
\ref{thm:af} and \ref{thm:a} may be viewed as statements about certain 
\emph{quadratic forms}: Theorem \ref{thm:af} is concerned with the 
quadratic form $(h_K,h_L)\mapsto \V(K,L,C_1,\ldots,C_{n-2})$, while 
Theorem \ref{thm:a} is concerned with the quadratic form $(A,B)\mapsto 
\D(A,B,M_1,\ldots,M_{n-3})$. From this perspective, both Theorems 
\ref{thm:af} and \ref{thm:a} can be interpreted as stating that the 
relevant quadratic form satisfies a \emph{reverse} form of the 
Cauchy-Schwarz inequality.

It is instructive to recall more generally when quadratic forms satisfy 
Cauchy-Schwarz inequalities. For example, it is a basic fact of linear 
algebra that a symmetric quadratic form $\langle x,Ax\rangle$ on 
$\mathbb{R}^d$ satisfies the Cauchy-Schwarz inequality $\langle 
x,Ay\rangle^2\le \langle x,Ax\rangle\langle y,Ay\rangle$ if and only if 
the matrix $A$ is positive or negative semidefinite. The validity of the 
reverse Cauchy-Schwarz inequality can be characterized in an entirely 
analogous manner, see section \ref{sec:2.3} for a short proof.

\begin{lem}[Hyperbolic quadratic forms]
\label{lem:hyper}
Let $A$ be a symmetric matrix.
Then the following conditions are equivalent:
\begin{enumerate}[1.]
\item $\langle x,Ay\rangle^2\ge\langle x,Ax\rangle\langle y,Ay\rangle$
for all $x,y$ such that $\langle y,Ay\rangle \ge 0$.
\item The positive eigenspace of $A$ has dimension at most one.
\end{enumerate}
The conclusion remains valid if $A$ is a 
self-adjoint operator on a Hilbert space with a discrete spectrum, 
provided the vectors $x,y$ are chosen in the domain of $A$.
\end{lem}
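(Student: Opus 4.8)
The plan is to prove the two implications $(2)\Rightarrow(1)$ and $\neg(2)\Rightarrow\neg(1)$ by direct spectral-theoretic arguments, treating the finite-dimensional and Hilbert-space cases uniformly via the spectral theorem. Throughout, write $\|x\|_A^2 := \langle x,Ax\rangle$ (a signed quantity), and diagonalize $A$ in an orthonormal eigenbasis $(e_i)$ with eigenvalues $(\mu_i)$; for the Hilbert-space case the discrete spectrum hypothesis guarantees such an eigenbasis exists and that every $x$ in the domain of $A$ expands as $x=\sum_i x_i e_i$ with $\sum_i \mu_i^2 x_i^2 < \infty$, which is all we need to manipulate the relevant sums.

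For $(2)\Rightarrow(1)$: suppose $A$ has at most one positive eigenvalue. If $A$ has no positive eigenvalue then $-A$ is positive semidefinite, so $\langle y,Ay\rangle\le 0$ for all $y$, and the hypothesis ``$\langle y,Ay\rangle\ge 0$'' forces $\langle y,Ay\rangle=0$; since $-A\succeq 0$, the ordinary Cauchy-Schwarz inequality for the semidefinite form $-A$ gives $\langle x,Ay\rangle^2 = \langle x,(-A)y\rangle^2 \le \langle x,-Ax\rangle\langle y,-Ay\rangle = 0$, and (1) holds trivially. The substantive case is when $A$ has exactly one positive eigenvalue; order the basis so $\mu_0>0\ge\mu_1\ge\mu_2\ge\cdots$. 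Then I would write, with $x=\sum x_i e_i$ and $y=\sum y_i e_i$,
$$
\langle x,Ay\rangle = \mu_0 x_0 y_0 + \sum_{i\ge 1}\mu_i x_i y_i,\qquad
\langle x,Ax\rangle = \mu_0 x_0^2 + \sum_{i\ge 1}\mu_i x_i^2,
$$
and similarly for $\langle y,Ay\rangle$. The key is that $-\sum_{i\ge1}\mu_i u_i v_i$ is a genuine (possibly degenerate) inner product on the ``negative part''. One clean route: the hypothesis $\langle y,Ay\rangle\ge 0$ means $\mu_0 y_0^2 \ge \sum_{i\ge1}(-\mu_i)y_i^2 =: \|y^-\|^2$, so in particular $y_0\neq 0$ unless $y^-=0$. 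Assuming $y_0\neq0$ (the case $y_0=0$ forces $y^-=0$, hence $y=0$ and (1) is trivial), rescale $y$ so that $y_0=1$; then $\mu_0\ge\|y^-\|^2$. A short computation reduces the desired inequality $\langle x,Ay\rangle^2\ge\langle x,Ax\rangle\langle y,Ay\rangle$ to an inequality of the shape
$$
\big(\mu_0 x_0 - \langle x^-,y^-\rangle_-\big)^2 \;\ge\; \big(\mu_0 x_0^2 - \|x^-\|^2\big)\big(\mu_0 - \|y^-\|^2\big),
$$
where $\langle\cdot,\cdot\rangle_-$ and $\|\cdot\|$ refer to the semidefinite form $-\sum_{i\ge1}\mu_i(\cdot)(\cdot)$. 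Expanding both sides, the $\mu_0$-terms and cross terms cancel or combine so that the inequality is equivalent to
$$
\langle x^-,y^-\rangle_-^2 - 2\mu_0 x_0\langle x^-,y^-\rangle_- + \mu_0 x_0^2\|y^-\|^2 + \|x^-\|^2\mu_0 - \|x^-\|^2\|y^-\|^2 \ge 0,
$$
which I would organize as $\big(\langle x^-,y^-\rangle_- - \mu_0 x_0\big)^2 + \big(\mu_0 - \|y^-\|^2\big)\|x^-\|^2 \ge \mu_0 x_0^2\big(\mu_0 - \|y^-\|^2\big)$ — wait, rather than risk an error here I would instead simply cite the standard fact: on the space spanned by $e_0$ (positive direction) and the negative part, $A$ restricts to a form of signature $(1,k)$ (or $(1,0)$), i.e. a Lorentzian form, and the reverse Cauchy-Schwarz inequality for Lorentzian forms on the ``forward cone'' $\{\langle y,Ay\rangle\ge 0\}$ is exactly the reverse triangle inequality in Minkowski space, proved by a one-line Cauchy-Schwarz applied to the Euclidean structure on the negative part. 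This is clean, dimension-free, and survives the passage to infinitely many negative eigenvalues because it only uses Cauchy-Schwarz for the (honestly positive semidefinite) form $-\sum_{i\ge1}\mu_i u_iv_i$ on domain vectors.

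For $\neg(2)\Rightarrow\neg(1)$: suppose $A$ has at least two positive eigenvalues $\mu_0,\mu_1>0$ with eigenvectors $e_0,e_1$. Then $e_0+e_1$ and $e_0-e_1$ both satisfy $\langle\cdot,A\cdot\rangle=\mu_0+\mu_1>0\ge 0$, so the hypothesis of (1) would apply with $x=e_0+e_1$, $y=e_0-e_1$; but $\langle x,Ay\rangle=\mu_0-\mu_1$ while $\langle x,Ax\rangle\langle y,Ay\rangle=(\mu_0+\mu_1)^2$, so $(\mu_0-\mu_1)^2\ge(\mu_0+\mu_1)^2$ fails (strictly, since $\mu_0,\mu_1>0$). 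Hence (1) fails. This direction is immediate and identical in finite and infinite dimensions.

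The main obstacle, and the only place requiring care, is the core estimate in $(2)\Rightarrow(1)$: one must correctly reduce to a Lorentzian (signature $(1,k)$) reverse Cauchy-Schwarz inequality and verify that the reduction does not secretly use finite-dimensionality. The resolution is to observe that the ``negative block'' $-\sum_{i\ge 1}\mu_i(\cdot)(\cdot)$ is positive semidefinite, so ordinary Cauchy-Schwarz applies to it verbatim regardless of the number of negative eigenvalues; combined with the single positive eigenvalue contributing one ``time'' direction, this yields the reverse inequality on the cone $\langle y,Ay\rangle\ge0$. I would also make one remark on the degenerate cases (zero eigenvalues of $A$, and vectors $y$ with $\langle y,Ay\rangle=0$ but $y\neq0$): these are handled by the same computation, since the argument never divides by $\langle y,Ay\rangle$, only by $y_0=\langle y,e_0\rangle$, which we showed is nonzero whenever $y$ has a nontrivial negative part — and when it does not, (1) is trivial.
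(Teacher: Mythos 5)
Your proposal is correct in outline, but it takes a genuinely different route from the paper. The paper never expands in an eigenbasis: it inserts an intermediate condition (condition 2 of Lemma \ref{lem:hyperfull}) --- there exists $w$ such that $\langle x,Ax\rangle\le 0$ whenever $\langle x,Aw\rangle=0$ --- which it deduces from the spectral hypothesis by taking $w$ to be the top eigenvector and invoking the variational characterization of the second eigenvalue, and from which the reverse Cauchy--Schwarz inequality follows by a projection/completing-the-square step: with $z=x-ay$, $a=\langle x,Aw\rangle/\langle y,Aw\rangle$, one has $\langle z,Aw\rangle=0$, hence $0\ge\langle z,Az\rangle\ge\langle x,Ax\rangle-\langle x,Ay\rangle^2/\langle y,Ay\rangle$. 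That intermediate condition is not idle: it is exactly what is reused in the mixed-discriminant argument of section \ref{sec:4}, so the paper's route buys an extra equivalent criterion, while yours buys a self-contained reduction to the classical Minkowski-space (signature $(1,k)$) reverse Cauchy--Schwarz; your converse direction, testing on $e_0\pm e_1$, is just as clean as the paper's test on an eigenvector pair. Two loose ends in your write-up should be tightened. First, you stop short of proving the central Lorentzian inequality and only assert it is a ``one-line Cauchy--Schwarz''; it does go through, but needs a touch more: after discarding the trivial case $\langle x,Ax\rangle\le 0$ and replacing $x,y$ by $\pm x,\pm y$ so that $a:=\sqrt{\mu_0}\,x_0\ge 0$ and $b:=\sqrt{\mu_0}\,y_0\ge 0$ (neither side changes), one has $a>\|x^-\|$, $b\ge\|y^-\|$, hence by Cauchy--Schwarz on the negative part $|ab-\langle x^-,y^-\rangle_-|\ge ab-\|x^-\|\,\|y^-\|\ge 0$, and then $(ab-\|x^-\|\,\|y^-\|)^2-(a^2-\|x^-\|^2)(b^2-\|y^-\|^2)=(a\|y^-\|-b\|x^-\|)^2\ge 0$; so it is Cauchy--Schwarz plus AM--GM (alternatively, the paper's projection trick with $w=y$ finishes it in one step). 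Second, when $y_0=0$ the cone condition only forces $\|y^-\|=0$ in the seminorm sense, so $y$ may still have a nonzero component in $\ker A$; the correct conclusion is $Ay=0$, not $y=0$, which still makes condition 1 trivial, so the argument survives but the phrase ``hence $y=0$'' should be fixed.
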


To apply Lemma \ref{lem:hyper} to the Alexandrov-Fenchel inequality,
we may reason as follows. Fix bodies $C_1,\ldots,C_{n-2}$, and define
\begin{equation}
\label{eq:atilde}
	\mathscr{\tilde A}f := 
	\frac{1}{n}\D(D^2f,D^2h_{C_1},\ldots,D^2h_{C_{n-2}}).
\end{equation}
Then the representation \eqref{eq:mvrep} can be expressed as
$$
	\V(K,L,C_1,\ldots,C_{n-2}) =
	\langle h_K,\mathscr{\tilde A}h_L\rangle_{L^2(\omega)}.
$$ 
Note that $\mathscr{\tilde A}$ is a second-order differential operator on 
$S^{n-1}$. It will follow from basic properties of mixed discriminants and 
mixed volumes that $\mathscr{\tilde A}$ is elliptic and 
symmetric on $L^2(\omega)$. Thus standard elliptic regularity theory shows 
that $\mathscr{\tilde A}$ is self-adjoint and that it has a 
discrete spectrum and a simple top eigenvalue (cf.\ section~\ref{sec:3}). 
Therefore, by Lemma \ref{lem:hyper}, the Alexandrov-Fenchel inequality is 
\emph{equivalent} to the statement that $\mathscr{\tilde A}$ has exactly 
one positive eigenvalue.

\subsection{The Bochner method}
\label{sec:1bochner}

Up to this point we have not formally made any progress towards proving 
the Alexandrov-Fenchel inequality: we have merely reformulated the 
statement of Theorem \ref{thm:af} as an equivalent spectral problem. The 
key question in the proof of Theorem \ref{thm:af} is why the relevant 
spectral property actually holds. What is new in this note is the 
realization that this follows almost immediately from Theorem \ref{thm:a}
by a one-line computation.

Let us sketch the relevant argument. It is convenient to normalize the 
operator $\mathscr{\tilde A}$ such that its top eigenvalue is $1$.
Let us call the normalized operator $\mathscr{A}$. As 
$\mathscr{A}f$ is defined by a mixed discriminant 
\eqref{eq:atilde}, what can be deduced from Theorem \ref{thm:a} is an 
inequality for $(\mathscr{A}f)^2$: indeed, when we choose the appropriate 
normalization, integrating both sides of Theorem \ref{thm:a} will 
immediately yield the inequality
\begin{equation}
\label{eq:lichner}
	\langle\mathscr{A}f,\mathscr{A}f\rangle \ge
	\langle f,\mathscr{A}f\rangle,
\end{equation}
where the inner product is the one associated to the
normalized operator (cf.\ section~\ref{sec:3}).
By plugging in for $f$ any eigenfunction of $\mathscr{A}$, it
follows that any eigenvalue $\lambda$ of $\mathscr{A}$ must satisfy
$\lambda^2\ge\lambda$. But as the normalization was chosen such that
$\lambda_{\rm max}=1$, this can evidently only happen if either $\lambda=1$
or $\lambda\le 0$, concluding the proof.

This very simple device sheds light on the reason why an 
inequality for mixed volumes can be deduced from an inequality for mixed 
discriminants: as our inequalities are spectral in nature, the spectral 
theorem reduces the problem of bounding the square of the quadratic form 
of an operator to that of bounding the square of the operator itself. 
Once this idea has been understood, it becomes apparent that it explains 
also other aspects of the Alexandrov-Fenchel theory. For example, 
the same principle will give a new proof of Theorem \ref{thm:a}.

While our approach has apparently been overlooked in the 
literature on the Alexandrov-Fenchel inequality,\footnote{%
	However, a recent paper of Wang \cite{Wan18} uses 
	various algebraic identities in K\"ahler geometry, including
	a Bochner-type formula, to give a complex-geometric proof of
	the Alexandrov-Fenchel inequality.
	While the connection with our elementary methods is unclear
        to us, \cite{Wan18} provided the initial inspiration to
	pursue the ideas in this paper.
}
the underlying idea is classical in Riemannian geometry: it was used by 
Lichnerowicz \cite{Lic58} to lower bound the spectral gap of the Laplacian 
on a Riemannian manifold with positive Ricci curvature. In this setting, 
the analogue of \eqref{eq:lichner} is established by means of a technique
known as the Bochner method. This analogy is not a 
coincidence: for example, in the case $C_1=\cdots=C_{n-2}=B_2$ (the 
Euclidean unit ball), it turns out that \eqref{eq:lichner} 
reduces exactly to a Bochner formula for the Laplacian on 
$S^{n-1}$, see section \ref{sec:bochner} below. We emphasize, however, 
that no Riemannian geometry will be used in our proofs.

\subsection{Organization of this paper}

The rest of this note is organized as follows. Section \ref{sec:2} recalls 
basic facts about mixed volumes and mixed discriminants. In section 
\ref{sec:3}, we prove Theorem \ref{thm:af} assuming validity of Theorem 
\ref{thm:a}. In section \ref{sec:4}, our method is adapted to prove 
Theorem \ref{thm:a} itself. In section \ref{sec:app} we sketch an 
alternative proof of Theorem \ref{thm:af} that uses polytopes instead of 
smooth bodies; while we find this approach less illuminating, it has the 
advantage of using only matrices and avoiding the use of elliptic operators.
Finally, section \ref{sec:concl} contains some concluding remarks that 
places our approach in context.

\section{Basic facts}
\label{sec:2}

The aim of this section is to recall the basic properties of mixed volumes 
and mixed discriminants that will be needed in the sequel. The material in 
this section is standard, see, e.g., \cite{BF87,Sch14}. We have 
nonetheless chosen to include (almost) full proofs, both in order to make 
our exposition accessible to non-experts and to emphasize that the facts 
recalled in this section are indeed elementary. Readers who are familiar 
with basic properties of mixed volumes and mixed discriminants are 
encouraged to skip ahead directly to section \ref{sec:3}.

\subsection{Convex bodies and support functions}
\label{sec:2.0}

A \emph{convex body} is a nonempty compact convex subset of 
$\mathbb{R}^n$. We will mostly work with bodies that are sufficiently 
smooth so that the representation formulas stated in section 
\ref{sec:intro} are valid. Let us make this requirement more precise.

As support functions are $1$-homogeneous functions on $\mathbb{R}^n$, 
let us first consider such functions more generally. First of all, a 
$1$-homogeneous function $f:\mathbb{R}^n\to\mathbb{R}$, i.e., 
$f(x)=\|x\|f(x/\|x\|)$, is clearly uniquely determined by its values 
on $S^{n-1}$. Conversely, the latter identity uniquely extends any 
function $f:S^{n-1}\to\mathbb{R}$ to a $1$-homogeneous function on 
$\mathbb{R}^n$. Now note that if $f$ is $1$-homogeneous and $C^2$, then 
$\nabla f$ is $0$-homogeneous, so that $\nabla^2f(x)x=0$. The Hessian of 
$f$ is therefore completely determined by the restriction of the linear 
map $\nabla^2f(x):\mathbb{R}^n\to\mathbb{R}^n$ to the tangent space 
$x^\perp$ of the sphere. We denote this restriction as 
$D^2f(x):x^\perp\to x^\perp$.\footnote{%
	By choosing a basis of $x^\perp$, one may express $D^2f(x)$ as an 
	$(n-1)$-dimensional matrix. However, we only use 
	determinants and mixed discriminants of such matrices which are 
	basis-independent.
}
If we begin instead with a $C^2$ function $f$ on $S^{n-1}$, then we 
denote by $D^2f(x)$ for $x\in S^{n-1}$ the restricted Hessian of its 
$1$-homogeneous extension.

The restricted Hessian $D^2f$ appears naturally when performing calculus
with support functions. For example, we have the following basic
result.\footnote{
	The notation $M>0$ ($M\ge 0$) denotes that
	$M$ is positive definite (positive semidefinite).}

\begin{lem}
\label{lem:d2supp}
Let $f:S^{n-1}\to\mathbb{R}$ be a $C^2$ function. Then $f=h_K$ for some
convex body $K$ if and only if $D^2f(x)\ge 0$ for all $x\in S^{n-1}$.
\end{lem}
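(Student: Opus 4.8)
The plan is to prove both implications using the correspondence between $1$-homogeneous functions and their restricted Hessians, together with the classical fact that support functions are exactly the $1$-homogeneous \emph{convex} functions. For the forward direction, suppose $f = h_K$. Then the $1$-homogeneous extension of $f$ to $\mathbb{R}^n$ is convex, since $h_K(x) = \sup_{y \in K}\langle y,x\rangle$ is a supremum of linear functions. A $C^2$ convex function has positive semidefinite Hessian $\nabla^2 f(x) \ge 0$ at every point, and restricting a positive semidefinite quadratic form to the subspace $x^\perp$ preserves this property; hence $D^2 f(x) \ge 0$ for all $x \in S^{n-1}$. This direction is essentially immediate.

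For the converse, suppose $f : S^{n-1} \to \mathbb{R}$ is $C^2$ with $D^2 f(x) \ge 0$ for all $x$, and let $\bar f$ denote its $1$-homogeneous extension. I would show $\bar f$ is convex on $\mathbb{R}^n$, for then $\bar f$ equals the support function of its subdifferential at the origin, $K := \{y \in \mathbb{R}^n : \langle y,x\rangle \le \bar f(x) \text{ for all } x\}$, which is a nonempty compact convex set (compactness follows from $\bar f$ being finite and $1$-homogeneous, so bounded on $S^{n-1}$), i.e.\ a convex body, and $h_K = \bar f$ by the biduality of closed convex cones / Legendre duality for sublinear functions. To verify convexity of $\bar f$, it suffices to check $\nabla^2 \bar f(x) \ge 0$ for all $x \ne 0$. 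Here I use the decomposition of $\mathbb{R}^n$ at the point $x$ into the radial direction $\mathbb{R}x$ and the tangent space $x^\perp$: by $0$-homogeneity of $\nabla \bar f$ we have $\nabla^2 \bar f(x)\, x = 0$, so the radial direction lies in the kernel, and the restriction to $x^\perp$ is exactly $D^2 f(x) \ge 0$ by hypothesis. Since $\nabla^2 \bar f(x)$ is a symmetric operator that annihilates $x$ and is positive semidefinite on the complementary subspace $x^\perp$, it is positive semidefinite on all of $\mathbb{R}^n$. Thus $\bar f$ is convex.

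The main obstacle is the converse direction, and within it the genuinely substantive point is the passage from ``$\nabla^2 \bar f(x) \ge 0$ for all $x \ne 0$'' to ``$\bar f$ is convex on $\mathbb{R}^n$.'' The subtlety is that the Hessian bound degenerates at the origin — $\bar f$ is typically not differentiable there — so one cannot directly invoke the standard criterion ``nonnegative Hessian implies convex'' on all of $\mathbb{R}^n$. I would handle this by first establishing convexity of $\bar f$ on the open set $\mathbb{R}^n \setminus \{0\}$ restricted to any convex subset not containing the origin, and then using $1$-homogeneity and continuity to extend: specifically, $1$-homogeneity forces the epigraph of $\bar f$ to be a cone, and a cone whose cross-sections are convex and which is closed is itself convex, which gives convexity through the origin. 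Alternatively, and perhaps more cleanly, one can argue that $\bar f$ is sublinear — positively homogeneous plus subadditive — where subadditivity $\bar f(x+y) \le \bar f(x) + \bar f(y)$ follows by integrating the convexity along segments in the punctured space and taking limits; sublinear functions are precisely support functions of compact convex sets, which closes the argument. I would present whichever of these is shortest, most likely just remarking that a continuous $1$-homogeneous function with $\nabla^2 \bar f \ge 0$ away from the origin is sublinear.
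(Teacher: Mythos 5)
Your proposal is correct and follows essentially the same route as the paper: the forward direction is immediate from convexity of support functions, and the converse shows the $1$-homogeneous extension is convex (via the decomposition $\nabla^2\bar f(x)x=0$ plus $D^2f\ge 0$ on $x^\perp$) and then identifies a convex $1$-homogeneous function as a support function by convex duality, which the paper phrases through the Legendre representation $f(x)=\sup_{y\in A}\{\langle y,x\rangle-f^*(y)\}$ with $f^*=0$ and $A$ bounded, and you phrase through $K=\partial\bar f(0)$ and sublinearity. Your extra care about convexity through the origin addresses a point the paper's proof passes over silently, but it is the same argument in substance.
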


\begin{proof}
As support functions are convex, clearly $D^2h_K\ge 0$. Conversely,
suppose that $D^2f\ge 0$. Then the $1$-homogeneous extension of $f$ is 
convex, so it can be written as the supremum of
affine functions $f(x)=\sup_{y\in A} \{\langle y,x\rangle -f^*(y)\}$.
It is readily verified that $1$-homogeneity implies $f^*=0$, and
that $A$ is bounded as $f$ is finite.
Thus $f(x)=\sup_{y\in A}\langle y,x\rangle=
h_{\overline{\mathrm{conv}}(A)}(x)$.
\end{proof}

An key corollary is that any $C^2$ function is a difference of 
support functions.

\begin{cor}
\label{cor:c2}
Let $f:S^{n-1}\to\mathbb{R}$ be a $C^2$ function and $L$ be a convex
body such that $D^2h_L>0$. Then there is a convex body $K$ and 
$a>0$ such that $f=a(h_K-h_{L})$. In particular, any $C^2$ function on 
$S^{n-1}$ is the difference of two support functions.
\end{cor}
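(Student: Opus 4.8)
The plan is to reduce the statement to Lemma~\ref{lem:d2supp} by writing down an explicit candidate for $h_K$. Since we want $f=a(h_K-h_L)$, the natural guess is $h_K:=a^{-1}f+h_L$; by Lemma~\ref{lem:d2supp} it then suffices to choose $a>0$ large enough that $D^2h_K=a^{-1}D^2f+D^2h_L\ge 0$ pointwise on $S^{n-1}$, equivalently that $D^2f(x)+a\,D^2h_L(x)\ge 0$ for every $x\in S^{n-1}$.

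First I would extract the two compactness bounds that make this work. Because $f$ is $C^2$ on the compact manifold $S^{n-1}$, the map $x\mapsto D^2f(x)$ is continuous, hence bounded, so there is $M<\infty$ with $D^2f(x)\ge -M\,I$ for all $x$. Because $D^2h_L(x)>0$ for every $x$ and $x\mapsto D^2h_L(x)$ is continuous on the compact set $S^{n-1}$, its smallest eigenvalue is a continuous positive function of $x$ and therefore attains a positive minimum $c>0$, so $D^2h_L(x)\ge c\,I$ for every $x$. Choosing any $a\ge M/c$ then gives $D^2f(x)+a\,D^2h_L(x)\ge(-M+ac)\,I\ge 0$ for all $x$. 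Hence $h_K:=a^{-1}f+h_L$ is the support function of a convex body $K$ by Lemma~\ref{lem:d2supp}, and $f=a(h_K-h_L)$ as claimed.

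For the final assertion I would specialize to $L=B_2$, the Euclidean unit ball. Its support function is $h_{B_2}(x)=\|x\|$, whose $1$-homogeneous extension has restricted Hessian $D^2h_{B_2}(x)=\mathrm{id}_{x^\perp}>0$ at every $x\in S^{n-1}$, so the hypothesis $D^2h_L>0$ is met. The first part then yields $f=a(h_K-h_{B_2})$, and since $a\,h_K=h_{aK}$ and $a\,h_{B_2}=h_{aB_2}$ are support functions, $f$ is exhibited as a difference of two support functions.

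I do not expect a serious obstacle here; the one point to be careful about is that the hypothesis $D^2h_L>0$ must be used in its \emph{uniform} form $D^2h_L\ge c\,I$, and this is exactly where compactness of $S^{n-1}$ enters---mere pointwise positivity of $D^2h_L$ would not let one fix a single constant $a$ that works everywhere.
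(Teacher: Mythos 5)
Your proof is correct and follows essentially the same route as the paper: use compactness of $S^{n-1}$ to get uniform bounds $D^2f\ge -M I$ and $D^2h_L\ge c\,I$, add a sufficiently large multiple of $h_L$ so the restricted Hessian becomes positive semidefinite, invoke Lemma~\ref{lem:d2supp}, and take $L=B_2$ for the final assertion. The only difference is cosmetic (you fold the constant into the definition of $h_K$ rather than rescaling afterwards), so there is nothing to add.
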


\begin{proof}
As $S^{n-1}$ is compact and $f,h_L$ are $C^2$ functions, we have
$D^2f\ge -\alpha I$ and $D^2h_L\ge \beta I$
for some $\alpha,\beta>0$. Thus
$g:=f+(\alpha/\beta)h_L$ satisfies $D^2g\ge 0$, so
$f = (\alpha/\beta)(h_K - h_L)$ 
for some convex body $K$ by Lemma \ref{lem:d2supp}.
We may always choose $L=B_2$ to be
the Euclidean ball (as $D^2h_{B_2}=I$).
\end{proof}

A convex body $K$ is of class $C^k_+$ ($k\ge 2$) if its support function 
$h_K$ is $C^k$ and satisfies $D^2h_K>0$. Such bodies will allow us to 
perform all the calculus we need; see \cite[section~2.5]{Sch14} for a 
detailed study of the regularity of such bodies. For our purposes, working 
with $C^\infty_+$ bodies entails no loss of generality, cf.\ Remark 
\ref{rem:c2}. As the approximation argument is unrelated to the topic of 
this paper, we omit further discussion and refer instead to \cite[sections 
3.4 and 5.1]{Sch14}.

\subsection{Representation of volumes and mixed volumes}
\label{sec:2.1}

We now prove \eqref{eq:volrep} and \eqref{eq:mvrep}. To prove
\eqref{eq:volrep}, we first use the divergence 
theorem to write $\Vol(K)$ as an integral over $\partial K$; 
then we change variables using the outer unit normal vector 
$n_K:\partial K\to S^{n-1}$ to map the integral to $S^{n-1}$. The 
term $\det(D^2h_K)$ that appears in \eqref{eq:volrep} is precisely the
Jacobian of this transformation.

\begin{lem}
\label{lem:volrep}
Let $K$ be a $C^2_+$ convex body. Then
$$
	\Vol(K) = \frac{1}{n}\int_{S^{n-1}}h_K\det(D^2h_K)\,d\omega.
$$
\end{lem}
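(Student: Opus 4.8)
The plan is to realize $\Vol(K)$ as an integral over the boundary $\partial K$ via the divergence theorem, and then to push that integral forward to $S^{n-1}$ using the Gauss map $n_K:\partial K\to S^{n-1}$, whose Jacobian is exactly $\det(D^2 h_K)$. Concretely, since $\div(x)=n$ on $\mathbb R^n$, the divergence theorem gives
$$
	\Vol(K) = \frac{1}{n}\int_{\partial K}\langle x, \nu(x)\rangle\, d\mathcal H^{n-1}(x),
$$
where $\nu(x)$ is the outer unit normal at $x\in\partial K$. For a $C^2_+$ body the Gauss map $n_K$ is a $C^1$ diffeomorphism from $\partial K$ onto $S^{n-1}$, so I would change variables $x = \nabla h_K(u)$ for $u\in S^{n-1}$ (recalling that $\nabla h_K$, extended $1$-homogeneously, maps a normal direction $u$ to the corresponding boundary point of $K$). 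Under this substitution $\nu(x) = u$, so $\langle x,\nu(x)\rangle = \langle \nabla h_K(u), u\rangle = h_K(u)$ by Euler's relation for the $1$-homogeneous function $h_K$.

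The remaining point is to identify the Jacobian of the map $u\mapsto \nabla h_K(u)$, i.e.\ of the inverse Gauss map $n_K^{-1}:S^{n-1}\to\partial K$. Since $\nabla h_K$ is $0$-homogeneous, its differential at $u\in S^{n-1}$ annihilates $u$ and maps the tangent space $u^\perp$ into itself; this differential is precisely the restricted Hessian $D^2 h_K(u):u^\perp\to u^\perp$. Comparing the surface measure $d\mathcal H^{n-1}$ on $\partial K$ pulled back along $n_K^{-1}$ with the surface measure $\omega$ on $S^{n-1}$ therefore introduces the factor $\det D^2 h_K(u)$ (which is positive by the $C^2_+$ hypothesis, so no absolute values are needed). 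Assembling the pieces yields
$$
	\Vol(K) = \frac{1}{n}\int_{S^{n-1}} h_K(u)\,\det\!\big(D^2 h_K(u)\big)\,d\omega(u),
$$
as claimed.

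The main obstacle is making the change-of-variables step fully rigorous: one must verify that for a $C^2_+$ body the Gauss map is genuinely a $C^1$ diffeomorphism onto $S^{n-1}$, and that the Jacobian of its inverse with respect to the two surface measures is exactly $\det D^2 h_K$ rather than that quantity times some extraneous metric factor. The cleanest way I would handle this is to work in local coordinates (or, cleaner still, to use the parametrization of $\partial K$ by $u\mapsto\nabla h_K(u)$ directly and compute the induced volume form), using that the first fundamental form of $\partial K$ in this parametrization is expressed through $D^2 h_K$; the positive-definiteness $D^2 h_K>0$ guarantees the parametrization is a local diffeomorphism, and convexity together with compactness upgrades this to a global diffeomorphism. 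Everything else is a routine application of Euler's relation and the divergence theorem.
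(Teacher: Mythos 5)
Your proposal is correct and follows essentially the same route as the paper: divergence theorem to reduce to a boundary integral of $\langle x,\nu(x)\rangle$, then a change of variables via the inverse Gauss map $u\mapsto\nabla h_K(u)$ (a diffeomorphism since $D^2h_K>0$), with Jacobian $\det(D^2h_K)$ and Euler's relation giving $\langle\nabla h_K(u),u\rangle=h_K(u)$. The technical points you flag (diffeomorphism property, identification of the Jacobian) are exactly the ones the paper also handles, so no further comment is needed.
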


\begin{proof}
By the divergence theorem,
$$
	\Vol(K) = \frac{1}{n} \int_K\div(x)\,dx 
	= \int_{\partial K} \langle x,n_K(x)\rangle\,d\omega_{K}(x),
$$
where $\omega_{K}$ is the surface measure on $\partial K$
and $n_K$ is the outer unit normal.
Now note that $\nabla h_K$ (the gradient is in $\mathbb{R}^n$) 
maps $u\in S^{n-1}$ to $\nabla h_K(u)=\mathrm{arg\,max}_{y\in K}\langle 
y,u\rangle \in\partial K$. As $D^2h_K>0$, the map $\nabla 
h_K:S^{n-1}\to \partial K$ is a diffeomorphism.
Thus
$$
	\Vol(K) = \frac{1}{n} \int_{S^{n-1}}
	\langle \nabla h_K,n_K(\nabla h_K)\rangle
	\det(D^2h_K)\,d\omega
$$
by the change of variables formula.
It remains to note that $\nabla h_K=n_K^{-1}$: indeed, as
$\langle y-x,n_K(x)\rangle \le 0$ for $x\in\partial K$ and $y\in K$
by convexity, we have
$\nabla h_K(n_K(x))=\mathrm{arg\,max}_{y\in K}\langle y,n_K(x)\rangle = x$.
As clearly $\langle \nabla h_K(u),u\rangle = \max_{y\in K}
\langle y,u\rangle=h_K(u)$, it follows that
$\langle \nabla h_K,n_K(\nabla h_K)\rangle = h_K$, and the proof is 
complete.
\end{proof}

Lemma \ref{lem:volrep} shows that volume is a polynomial in the sense of 
\eqref{eq:defmv}, but this does not immediately yield \eqref{eq:mvrep}: 
choosing $K=\lambda_1K_1+\cdots+\lambda_nK_n$ in Lemma \ref{lem:volrep} 
and using \eqref{eq:defmd} would give \eqref{eq:mvrep} averaged over all 
permutations of $K_1,\ldots,K_n$. To prove a non-symmetric representation, 
it is convenient to first prove a special case.

\begin{lem}
\label{lem:volvar}
Let $K,L$ be $C^2_+$ convex bodies. Then
$$
	\V(K,L,\ldots,L) = \frac{1}{n}\int_{S^{n-1}}
	h_K\det(D^2h_L)\,d\omega.
$$
\end{lem}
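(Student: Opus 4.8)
The plan is to derive Lemma \ref{lem:volvar} from Lemma \ref{lem:volrep} by expanding both sides as polynomials in a single scalar parameter. Specifically, apply Lemma \ref{lem:volrep} to the convex body $L + tK$ for $t > 0$; since $h_{L+tK} = h_L + t h_K$, the left-hand side becomes $\Vol(L + tK)$, a polynomial in $t$ by \eqref{eq:defmv}, whose coefficient of $t^1$ equals $n\,\V(K,L,\ldots,L)$ (using the symmetry of mixed volumes and the fact that there are $n$ ways to place the single $K$ among the $n$ slots). On the right-hand side, Lemma \ref{lem:volrep} gives
$$
\Vol(L+tK) = \frac{1}{n}\int_{S^{n-1}} (h_L + t h_K)\,\det\big(D^2 h_L + t\, D^2 h_K\big)\,d\omega,
$$
so I need to extract the coefficient of $t^1$ from the integrand and match it to $h_K \det(D^2 h_L)$.

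The key computational step is differentiating $\det(D^2 h_L + t\, D^2 h_K)$ in $t$ at $t = 0$. By Jacobi's formula, $\frac{d}{dt}\det(M + tN)\big|_{t=0} = \operatorname{tr}(\operatorname{adj}(M) N)$, and since $M = D^2 h_L$ is positive definite (as $L$ is $C^2_+$), this equals $\det(M)\operatorname{tr}(M^{-1} N)$. Hence the $t^1$ coefficient of the integrand is
$$
h_K \det(D^2 h_L) + h_L \det(D^2 h_L)\operatorname{tr}\big((D^2 h_L)^{-1} D^2 h_K\big).
$$
Integrating, this gives $n\,\V(K,L,\ldots,L) = \frac{1}{n}\int_{S^{n-1}} h_K\det(D^2 h_L)\,d\omega + \frac{1}{n}\int_{S^{n-1}} h_L\det(D^2 h_L)\operatorname{tr}\big((D^2 h_L)^{-1} D^2 h_K\big)\,d\omega$, which is off by a factor and has an extra term. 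The resolution is that the first-order term in $\det(D^2 h_L + t D^2 h_K)$ is itself, up to a constant, a mixed discriminant $\D(D^2 h_K, D^2 h_L, \ldots, D^2 h_L)$, and I should instead recognize that the whole approach needs the symmetry of mixed volumes more carefully.

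The cleaner route, which I would actually adopt, is to avoid differentiation and instead use a self-adjointness (integration by parts) identity directly. The crucial claim is that the operator $f \mapsto \det(D^2 h_L)\operatorname{tr}((D^2 h_L)^{-1} D^2 f)$, or equivalently $f\mapsto n\,\D(D^2 f, D^2 h_L, \ldots, D^2 h_L)$, is \emph{symmetric} on $L^2(\omega)$, i.e.,
$$
\int_{S^{n-1}} h_K\,\D(D^2 h_L, D^2 h_L, \ldots, D^2 h_L)\,d\omega
= \int_{S^{n-1}} h_L\,\D(D^2 h_K, D^2 h_L, \ldots, D^2 h_L)\,d\omega,
$$
after suitable bookkeeping. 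Granting this, the coefficient of $t^1$ in $\Vol(L + tK)$ can be rewritten using the symmetry to combine the two terms into $\frac{1}{n}\int h_K \det(D^2 h_L)\,d\omega$ multiplied by the correct combinatorial factor, yielding the claimed formula. The main obstacle is precisely establishing this symmetry/integration-by-parts identity for the restricted Hessian on the sphere: it follows from the divergence structure of mixed discriminants (the ``cofactor matrices'' of $D^2 h_L$ are divergence-free as tensor fields on $S^{n-1}$), which is a standard but not entirely trivial fact; I would isolate it as its own computation, proving that $\operatorname{div}\big(\operatorname{cof}(D^2 h_L)\,\nabla_{S^{n-1}} f\big)$ integrates to zero and iterating. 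Once the symmetry lemma is in hand, matching coefficients of $t$ in the polynomial identity gives Lemma \ref{lem:volvar} immediately.
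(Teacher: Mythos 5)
Your reduction to a polynomial expansion is fine as bookkeeping, but the entire weight of the proof then rests on the symmetry identity
$$
	\int_{S^{n-1}} h_L\,\D(D^2h_K,D^2h_L,\ldots,D^2h_L)\,d\omega
	= \int_{S^{n-1}} h_K\,\det(D^2h_L)\,d\omega ,
$$
which you assert but do not prove, and this is precisely where all the difficulty of the lemma lives. Indeed, the reason the paper does not deduce \eqref{eq:mvrep} from \eqref{eq:volrep} by expanding the polynomial is exactly the phenomenon your computation runs into: expansion only produces the representation symmetrized over permutations, and passing to the non-symmetric representation requires a genuinely new input. Note also that within the paper's logical order this symmetry cannot be cited: the symmetry of the form $(f,g)\mapsto\int f\,\D(D^2g,D^2h_{C_1},\ldots)\,d\omega$ is obtained \emph{downstream}, from Corollary \ref{cor:mvrep} together with Lemma \ref{lem:mvprop}(\textit{b}) and Corollary \ref{cor:c2}, so using it here would be circular unless you prove it independently. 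The independent proof is the divergence-free-cofactor (Cheng--Yau type) lemma, and it is a real computation: one must show that for $1$-homogeneous extensions the tensor $\nabla_k(D^2h_L)_{ij}$ on $S^{n-1}$ is totally symmetric (a Codazzi property), whence the mixed-discriminant cofactor $C^{ij}$ satisfies $\nabla_iC^{ij}=0$ pointwise, and then integrate by parts twice. Your formulation of what needs proving is also off target: on the closed manifold $S^{n-1}$ \emph{every} divergence integrates to zero, so ``$\mathrm{div}(\mathrm{cof}(D^2h_L)\nabla_{S^{n-1}}f)$ integrates to zero'' is vacuous; the substantive claim is the pointwise identity $\mathrm{div}(\mathrm{cof}(D^2h_L)\nabla_{S^{n-1}}f)=\sum_{i,j}C^{ij}\nabla^2_{ij}f$, i.e., that no first-order terms appear, which is exactly the vanishing divergence of the cofactor.

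For comparison, the paper's proof sidesteps this identity altogether. It first proves the one-sided bound $n\V(K,L,\ldots,L)\ge\int h_K\det(D^2h_L)\,d\omega$ by a variational argument: perturb $1_L$ along Lipschitz vector fields $Y$ with values in $K$, use the change-of-variables formula and the divergence theorem, and then transport the boundary integral to $S^{n-1}$ via $\nabla h_L$ as in Lemma \ref{lem:volrep}. The reverse inequality is then obtained by a linearity trick: write $-h_K=a(h_C-h_L)$ for a $C^2_+$ body $C$ (Corollary \ref{cor:c2}), apply the one-sided bound to $C$, and use Lemma \ref{lem:volrep} and multilinearity of mixed volumes. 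So your route can be completed --- it is the classical integration-by-parts-on-the-sphere approach --- but as written it has a gap exactly at its central step, and filling it requires more machinery than the paper's elementary two-inequality argument.
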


\begin{proof}
The idea is to repeat the proof of Lemma \ref{lem:volrep}, but replacing 
$\div(x)$ by $\div(Y)$ for some suitably chosen vector field $Y$. More 
precisely, let $Y$ be a bounded Lipschitz vector field. Then $I-t\nabla 
Y$ is nonsingular for sufficiently small $t$. Therefore
\begin{align*}
	&
	\lim_{t\to 0}\frac{1}{t}\bigg\{
	\int_{\mathbb{R}^n} 1_L(x-tY(x))\,dx - \Vol(L)\bigg\} = \\
	&
	\lim_{t\to 0}\int_{\mathbb{R}^n}
	1_L(x-tY(x))\,\frac{1-\det(I-t\nabla Y(x))}{t}\,dx
	=
	\int_L \div(Y)\,dx = \int_{\partial L} \langle Y,n_L\rangle
	\,d\omega_L,
\end{align*}
where we used the change of variables formula in the first step, and the
divergence theorem in the last step.
Now take the supremum on both sides over Lipschitz vector fields $Y$
taking values in $K$. As $1_L(x-tY(x))\le 1_{L+tK}(x)$ for any such
$Y$,
\begin{align*}
	n\V(K,L,\ldots,L) &=
	\lim_{t\to 0}\frac{\Vol(L+tK)-\Vol(L)}{t}
	\\
	&\ge \int_{\partial L} h_K(n_L)\,d\omega_L
	=
	\int_{S^{n-1}} h_K\det(D^2h_L)\,d\omega,
\end{align*}
where we changed variables in the last step using $\nabla h_L$ as in 
Lemma \ref{lem:volrep}.

To obtain the reverse inequality, note that by Corollary \ref{cor:c2}, 
there is a $C^2_+$ body $C$ and $a>0$ such that $-h_K = a(h_C-h_L)$. As 
mixed volumes are linear in each argument (this follows from 
\eqref{eq:defmv}), $\V(K,L,\ldots,L) = a(\Vol(L) - \V(C,L,\ldots,L))$. 
Applying the above inequality to $\V(C,L,\ldots,L)$ and Lemma 
\ref{lem:volrep}, we readily obtain the reversed inequality for 
$\V(K,L,\ldots,L)$.
\end{proof}

Choosing $K=K_1$, $L=\lambda_2K_2+\cdots+\lambda_nK_n$ in Lemma 
\ref{lem:volvar}, and applying the definitions \eqref{eq:defmv} and 
\eqref{eq:defmd} of mixed volumes and discriminants, directly yields 
\eqref{eq:mvrep}.

\begin{cor}
\label{cor:mvrep}
Let $K_1,\ldots,K_n$ be $C^2_+$ convex bodies. Then
$$
	\V(K_1,\ldots,K_n) = 
	\frac{1}{n}\int_{S^{n-1}} h_{K_1}
	\D(D^2h_{K_2},\ldots,D^2h_{K_n})\,d\omega.
$$
\end{cor}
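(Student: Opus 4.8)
The goal is to derive Corollary~\ref{cor:mvrep} from Lemma~\ref{lem:volvar} by a suitable polarization. The plan is to exploit the fact that mixed volumes and mixed discriminants are, by their defining relations \eqref{eq:defmv} and \eqref{eq:defmd}, the coefficients of certain polynomials, so that \emph{any} identity between such polynomials holds coefficient-by-coefficient. Concretely, fix $C^2_+$ bodies $K_1,\dots,K_n$ and scalars $\lambda_2,\dots,\lambda_n>0$, and apply Lemma~\ref{lem:volvar} with $K=K_1$ and $L=\lambda_2 K_2+\cdots+\lambda_n K_n$ (which is again $C^2_+$, being a positive combination of $C^2_+$ bodies). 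Since $h_L=\sum_{i\ge 2}\lambda_i h_{K_i}$, the left-hand side becomes $\V(K_1,L,\dots,L)$, and expanding $L$ via multilinearity of mixed volumes — which follows from \eqref{eq:defmv} — gives
$$
	\V(K_1,L,\ldots,L)=\sum_{i_2,\ldots,i_n=2}^{n}
	\V(K_1,K_{i_2},\ldots,K_{i_n})\,\lambda_{i_2}\cdots\lambda_{i_n}.
$$

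On the right-hand side of Lemma~\ref{lem:volvar}, the key point is that $D^2$ is a linear operator on $C^2$ functions, so $D^2 h_L=\sum_{i\ge 2}\lambda_i D^2 h_{K_i}$, and therefore by the defining relation \eqref{eq:defmd} for mixed discriminants,
$$
	\det(D^2h_L)=\D\!\Big(\textstyle\sum_i\lambda_i D^2h_{K_i},\ldots,\sum_i\lambda_i D^2h_{K_i}\Big)
	=\sum_{i_2,\ldots,i_n=2}^{n}\D(D^2h_{K_{i_2}},\ldots,D^2h_{K_{i_n}})\,\lambda_{i_2}\cdots\lambda_{i_n}.
$$
Multiplying by $h_{K_1}$ and integrating over $S^{n-1}$ — the integrand is continuous since all bodies are $C^2_+$, so this is legitimate — the identity of Lemma~\ref{lem:volvar} becomes an identity between two polynomials in $\lambda_2,\dots,\lambda_n$. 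Matching the coefficient of $\lambda_2\lambda_3\cdots\lambda_n$ (the ``fully mixed'' monomial, with all indices distinct) on both sides yields exactly
$$
	\V(K_1,\ldots,K_n)=\frac1n\int_{S^{n-1}}h_{K_1}\,\D(D^2h_{K_2},\ldots,D^2h_{K_n})\,d\omega,
$$
where one uses that both $\V(K_1,\cdot,\ldots,\cdot)$ and $\D(\cdot,\ldots,\cdot)$ are symmetric in their last $n-1$ arguments, so all the ways of producing the monomial $\lambda_2\cdots\lambda_n$ collect into a single term with equal coefficients on each side.

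The only subtlety — and it is minor — is bookkeeping: a priori the coefficient of $\lambda_2\cdots\lambda_n$ on the left picks up all permutations $\V(K_1,K_{\sigma(2)},\ldots,K_{\sigma(n)})$, and similarly on the right, so one must invoke the symmetry of mixed volumes (built into the definition via \eqref{eq:defmv}) and of mixed discriminants (built into \eqref{eq:defmd}) to see that each of these equals the single term displayed above, whence the combinatorial factors $(n-1)!$ cancel. I do not expect any real obstacle here; the argument is purely formal once Lemma~\ref{lem:volvar} and the linearity of $D^2$ are in hand. This is precisely the ``small modification of the proof of \eqref{eq:volrep}'' promised in the introduction, and it is worth emphasizing that the non-symmetric form of the representation \eqref{eq:mvrep} is genuinely stronger than what one would get by polarizing Lemma~\ref{lem:volrep} directly, which is why Lemma~\ref{lem:volvar} — with its asymmetric roles for $K$ and $L$ — was needed as an intermediate step.
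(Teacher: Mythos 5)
Your argument is correct and is essentially the paper's own proof: the paper also takes $K=K_1$, $L=\lambda_2K_2+\cdots+\lambda_nK_n$ in Lemma~\ref{lem:volvar} and identifies coefficients via the defining relations \eqref{eq:defmv} and \eqref{eq:defmd}, with the symmetry of $\V$ and $\D$ handling the $(n-1)!$ bookkeeping exactly as you describe. You have simply spelled out the coefficient-matching that the paper leaves implicit.
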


\subsection{Basic properties of mixed volumes and mixed discriminants}
\label{sec:2.2}

We now proceed to recall the basic properties of mixed volumes and mixed 
discriminants.

\begin{lem}[Properties of mixed discriminants]
\label{lem:mdprop}
Let $M,M_1,\ldots,M_{n-1}$ be symmetric $(n-1)$-dimensional matrices
and $U$ be an $(n-1)$-dimensional matrix.
\begin{enumerate}[(a)]
\item $\D(M,\ldots,M) =\det(M)$.
\item $\D(M_1,\ldots,M_{n-1})$ is symmetric and multilinear in its
arguments.
\item $\D(UM_1U^*,\ldots,UM_{n-1}U^*) =
\det(UU^*)\D(M_1,\ldots,M_{n-1})$.
\item $\D(M_1,\ldots,M_{n-1})\ge 0$ 
if $M_1,M_2,\ldots,M_{n-1}\ge 0$.
\item $\D(M_1,\ldots,M_{n-1})>0$ if $M_2,\ldots,M_{n-1}>0$
and $M_1\geq 0$, $M_1\ne 0$.
\item $\D(e_ie_i^*,M_2,\ldots,M_{n-1}) = \frac{1}{n-1}
\D(M_2^{\langle i\rangle},\ldots,M_{n-1}^{\langle i\rangle})$, where 
$\{e_i\}$ is the standard basis in $\mathbb{R}^{n-1}$ and
$M^{\langle i\rangle}$ is obtained from $M$ by removing
its $i$-th row and column.
\end{enumerate}
\end{lem}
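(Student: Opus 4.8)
The plan is to derive every item from a single explicit formula for the mixed discriminant. Reading off from \eqref{eq:defmd} (with $m=n-1$) the coefficient of the squarefree monomial $\lambda_1\cdots\lambda_{n-1}$, which by symmetry of $\D$ equals $(n-1)!\,\D(M_1,\ldots,M_{n-1})$, and expanding the determinant by the Leibniz formula $\det(A)=\sum_{\sigma\in S_{n-1}}\mathrm{sgn}(\sigma)\prod_k A_{k,\sigma(k)}$ (where $S_{n-1}$ is the symmetric group), one obtains the polarization identity
\[
	(n-1)!\,\D(M_1,\ldots,M_{n-1})=
	\sum_{\pi,\sigma\in S_{n-1}}\mathrm{sgn}(\sigma)
	\prod_{k=1}^{n-1}\bigl(M_{\pi(k)}\bigr)_{k,\sigma(k)}.
\]
From the right-hand side, (a) and (b) are immediate: it is linear in each $M_j$, it is unchanged when the $M_j$ are permuted (absorb the permutation into $\pi$), and it collapses to $(n-1)!\det M$ when all $M_j=M$. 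For (c), I would invoke uniqueness of the polar form: both $(M_1,\ldots,M_{n-1})\mapsto\D(UM_1U^*,\ldots,UM_{n-1}U^*)$ and $(M_1,\ldots,M_{n-1})\mapsto\det(UU^*)\,\D(M_1,\ldots,M_{n-1})$ are symmetric and multilinear, so they coincide as soon as their diagonal restrictions do, where the claim is the elementary identity $\det(UMU^*)=\det(UU^*)\det(M)$.

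Next I would establish (f), since (e) uses it. For any matrix $B$, cofactor expansion of $\det(B+\lambda e_ie_i^*)$ along row $i$ shows it is affine in $\lambda$, with constant term $\det B$ and slope the $(i,i)$ cofactor $\det(B^{\langle i\rangle})$. Taking $B=\mu_2M_2+\cdots+\mu_{n-1}M_{n-1}$ and using that deletion of row and column $i$ commutes with linear combinations, the coefficient of $\lambda$ in $\det(\lambda e_ie_i^*+\sum_{j\ge2}\mu_jM_j)$ is $\det(\sum_{j\ge2}\mu_jM_j^{\langle i\rangle})$. Matching the coefficient of $\lambda\mu_2\cdots\mu_{n-1}$ on the left, which is $(n-1)!\,\D(e_ie_i^*,M_2,\ldots,M_{n-1})$ by \eqref{eq:defmd}, with the coefficient of $\mu_2\cdots\mu_{n-1}$ on the right, which is $(n-2)!\,\D(M_2^{\langle i\rangle},\ldots,M_{n-1}^{\langle i\rangle})$, gives (f).

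For (d), I would use that any $M_j\ge0$ factors as $M_j=\sum_\ell v_{j\ell}v_{j\ell}^*$, so by multilinearity it suffices to treat rank-one arguments. With $V=[v_1\mid\cdots\mid v_{n-1}]$ one has $\sum_k\lambda_kv_kv_k^*=V\diag(\lambda_1,\ldots,\lambda_{n-1})V^*$, hence $\det(\sum_k\lambda_kv_kv_k^*)=(\det V)^2\lambda_1\cdots\lambda_{n-1}$; comparing with \eqref{eq:defmd} yields $\D(v_1v_1^*,\ldots,v_{n-1}v_{n-1}^*)=(\det V)^2/(n-1)!\ge0$, proving (d). I would also record the immediate corollary that $\D$ is monotone in each slot, i.e.\ $\D(A,M_2,\ldots,M_{n-1})\le\D(A',M_2,\ldots,M_{n-1})$ whenever $A\le A'$ and $M_2,\ldots,M_{n-1}\ge0$, obtained by applying (d) and linearity to the difference.

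Property (e) is the step I expect to require real care. Monotonicity already settles the positive definite case: if $N_1,\ldots,N_{n-1}\ge\varepsilon I$ then $\D(N_1,\ldots,N_{n-1})\ge\D(\varepsilon I,\ldots,\varepsilon I)=\varepsilon^{n-1}>0$, and the same holds in any dimension. For the general case, given $M_1\ge0$ with $M_1\ne0$ and $M_2,\ldots,M_{n-1}>0$, write $M_1=\sum_\ell v_\ell v_\ell^*$, fix a unit vector $v$ and $c>0$ with $M_1\ge c\,vv^*$, and conclude from monotonicity that $\D(M_1,M_2,\ldots,M_{n-1})\ge c\,\D(vv^*,M_2,\ldots,M_{n-1})$. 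Choosing an orthogonal $U$ with $Ue_1=v$, (c) gives $\D(vv^*,M_2,\ldots,M_{n-1})=\D(e_1e_1^*,U^*M_2U,\ldots,U^*M_{n-1}U)$, and then (f) expresses this as $\tfrac{1}{n-1}\D\bigl((U^*M_2U)^{\langle1\rangle},\ldots,(U^*M_{n-1}U)^{\langle1\rangle}\bigr)$ — a mixed discriminant of positive definite $(n-2)$-dimensional matrices, hence strictly positive by the case already treated. The only genuine subtlety is keeping the logical order straight so as to avoid circularity: (a),(b) are foundational, (c) uses only uniqueness of polarization, (f) uses only linear algebra, (d) uses (a),(b), and (e) uses (b),(c),(d),(f).
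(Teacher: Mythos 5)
Your proposal is correct, and for parts (a)--(d) it follows essentially the paper's route (the explicit Leibniz polarization formula and the polarization-uniqueness argument for (c) are just more formal versions of ``follows directly from \eqref{eq:defmd}''; the rank-one computation $\D(v_1v_1^*,\ldots,v_{n-1}v_{n-1}^*)=\det(V)^2/(n-1)!$ for (d) is exactly \eqref{eq:rank1}). Where you genuinely diverge is in (e) and (f). The paper uses the rank-one formula as the single engine for all of (d), (e), (f): positivity (e) is obtained in one line by writing $M_i=M_i'+v_iv_i^*$ with $M_i'\ge 0$ and $v_1,\ldots,v_{n-1}$ linearly independent (possible since $M_1\ne 0$ and $M_i>0$ for $i\ge 2$) and invoking \eqref{eq:rank1}, while (f) is read off from \eqref{eq:rank1} for rank-one arguments and extended by linearity. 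You instead prove (f) directly by cofactor expansion of $\det(B+\lambda e_ie_i^*)$ and coefficient matching, and prove (e) by first recording monotonicity of $\D$ in each slot (a consequence of (d) and linearity), settling the positive definite case by comparison with $\varepsilon I$, and then reducing the general case to an $(n-2)$-dimensional positive definite mixed discriminant via $M_1\ge c\,vv^*$, an orthogonal change of basis through (c), and (f). Both routes are elementary and correct; the paper's argument for (e) is shorter, while yours buys the explicit monotonicity statement and a dimension-reduction step of the same flavor as the use of Lemma \ref{lem:mdprop}(\textit{f}) in section \ref{sec:4}, at the cost of a slightly longer chain of dependencies (your ordering (a),(b),(c),(f),(d),(e) does avoid circularity, as you note). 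One cosmetic caveat: since the coefficients in \eqref{eq:defmd} are only determined up to symmetrization, symmetry of $\D$ is really a normalization built into the definition (as the paper remarks for mixed volumes), so your derivation of the polarization identity should cite it as such rather than later presenting symmetry as a consequence; this is a matter of phrasing, not a gap.
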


\begin{rem}
\label{rem:mdlowdim}
Note that, by definition, the mixed discriminant of $k$-dimensional 
matrices has $k$ arguments. Therefore, as no confusion can arise, we
denote mixed discriminants in every dimension by the same symbol $\D$
(e.g., as in Lemma \ref{lem:mdprop}(\textit{f})).
\end{rem}

\begin{proof}
Parts (\textit{a}) and (\textit{b}) follow directly from the definition
\eqref{eq:defmd}. Part (\textit{c}) also follows from 
\eqref{eq:defmd} using $\det(UMU^*)=\det(UU^*)\det(M)$.
For the remaining parts, it is useful to compute the mixed discriminant of 
rank one matrices. Let $v_1,\ldots,v_{n-1}\in\mathbb{R}^{n-1}$ be the 
columns of a matrix $V$. Then
$\det\big(\sum_{i=1}^{n-1}v_iv_i^*\big)=\det(VV^*)=\det(V)^2$.
By scaling $v_i$ we obtain 
$\det\big(\sum_{i=1}^{n-1}\lambda_iv_iv_i^*\big)=\lambda_1\cdots\lambda_{n-1}\det(V)^2$,
so \eqref{eq:defmd} implies
\begin{equation}
\label{eq:rank1}
	\D(v_1v_1^*,\ldots,v_{n-1}v_{n-1}^*) =
	\frac{\det(V)^2}{(n-1)!}\ge 0.
\end{equation}
Part (\textit{d}) now follows from linearity of mixed discriminants, as 
any $M\ge 0$ can be written as the sum of rank one matrices of the form 
$vv^*$. If $M_1\ge 0$, $M_1\ne 0$ and $M_i>0$ for $i\ge 2$, we can write 
$M_i=M_i'+v_iv_i^*$ for each $i$ where $M_i'\ge 0$ and 
$v_1,\ldots,v_{n-1}$ are linearly independent. Then part (\textit{e}) 
follows by observing that 
$\D(v_1v_1^*,\ldots,v_{n-1}v_{n-1}^*)>0$
by \eqref{eq:rank1}. Finally, part (\textit{f})
follows for $M_i=v_iv_i^*$ directly from \eqref{eq:rank1}, and 
extends to general $M_i$ by linearity.
\end{proof}

\begin{lem}[Properties of mixed volumes]
\label{lem:mvprop}
Let $K,K_1,\ldots,K_n$ be convex bodies.
\begin{enumerate}[(a)]
\item $\V(K,\ldots,K)=\Vol(K)$.
\item $\V(K_1,\ldots,K_n)$ is symmetric and multilinear in its arguments.
\item $\V(K_1,\ldots,K_n)$ is invariant under translation $K_i\mapsto
K_i+z_i$.
\item $\V(K_1,\ldots,K_n)\ge 0$.
\end{enumerate}
\end{lem}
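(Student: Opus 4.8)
The plan is to establish Lemma~\ref{lem:mvprop} by reducing each property to the corresponding statement about mixed discriminants in Lemma~\ref{lem:mdprop}, using the integral representation of Corollary~\ref{cor:mvrep}. Throughout I work with $C^2_+$ bodies, invoking Remark~\ref{rem:c2} and the approximation discussion of section~\ref{sec:2.0} to pass to general convex bodies by continuity at the end.

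Part~(\textit{a}) is essentially a tautology: setting $\lambda_1=\cdots=\lambda_m=1$ with a single body in \eqref{eq:defmv} shows $\V(K,\ldots,K)=\Vol(K)$ directly from the defining polynomial identity, since the left side is $\Vol(mK)/m^n$-type scaling but more simply one just reads off the diagonal coefficient. Part~(\textit{b}): multilinearity is immediate from \eqref{eq:defmv}, as the mixed volumes are by definition the coefficients of a multilinear polynomial expansion; symmetry is likewise built into the definition \eqref{eq:defmv} since the left side $\Vol(\sum\lambda_iK_i)$ is manifestly symmetric under permuting the pairs $(\lambda_i,K_i)$, hence the coefficients $\V(K_{i_1},\ldots,K_{i_n})$ may be taken symmetric. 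For part~(\textit{c}), translation invariance, I would use the representation \eqref{eq:mvrep} together with the observation that if $K_i\mapsto K_i+z_i$ then $h_{K_i}\mapsto h_{K_i}+\langle z_i,\cdot\rangle$, and the restricted Hessian $D^2$ of the linear function $x\mapsto\langle z_i,x\rangle$ vanishes; thus $D^2h_{K_i}$ is unchanged for $i\ge 2$, and for $i=1$ one needs the extra fact that $\int_{S^{n-1}}\langle z_1,x\rangle\,\D(D^2h_{K_2},\ldots,D^2h_{K_n})\,d\omega=0$. This last identity follows by symmetry of mixed volumes (already proved in part~(\textit{b})) combined with writing the linear function as a difference of support functions à la Corollary~\ref{cor:c2}, or more directly from the fact that $\V(\cdot,K_2,\ldots,K_n)$ is translation invariant in each of the $K_i$ with $i\ge 2$ so one may symmetrize.

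Part~(\textit{d}), nonnegativity, is the crux and the part where Lemma~\ref{lem:mdprop} does the real work. By Corollary~\ref{cor:mvrep}, $\V(K_1,\ldots,K_n)=\frac1n\int_{S^{n-1}}h_{K_1}\,\D(D^2h_{K_2},\ldots,D^2h_{K_n})\,d\omega$. By Lemma~\ref{lem:d2supp}, each $D^2h_{K_j}\ge 0$, so by Lemma~\ref{lem:mdprop}(\textit{d}) the integrand's discriminant factor is pointwise nonnegative. However the factor $h_{K_1}$ need not be nonnegative. This is resolved by translation invariance (part~(\textit{c})): translating $K_1$ so that it contains the origin makes $h_{K_1}\ge 0$ everywhere, and $\V$ is unchanged. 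Hence the integrand is pointwise nonnegative and the integral is nonnegative.

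The main obstacle is the subtlety that the three parts are interdependent in the order one proves them: part~(\textit{d}) needs part~(\textit{c}), and the clean proof of part~(\textit{c}) via symmetrization needs part~(\textit{b}) and possibly a direct argument that the "linear part" of a support function integrates against a mixed discriminant of Hessians to zero. One must be careful to order the arguments so there is no circularity; I expect one can prove (\textit{c}) for the arguments $K_2,\ldots,K_n$ directly from the $D^2$-invariance observation, then use symmetry (\textit{b}) to get invariance in $K_1$ as well, and only then deduce (\textit{d}). Everything else is routine bookkeeping with the definitions and the representation formula.
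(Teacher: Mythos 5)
Your proposal is correct, and parts (\textit{a}), (\textit{b}) and (\textit{d}) coincide with the paper's proof: (\textit{a}), (\textit{b}) are read off from the definition \eqref{eq:defmv}, and (\textit{d}) is obtained exactly as you say, by translating $K_1$ so that $0\in K_1$, using Lemma~\ref{lem:d2supp}, Lemma~\ref{lem:mdprop}(\textit{d}) and Corollary~\ref{cor:mvrep} for $C^2_+$ bodies, and approximating (Remark~\ref{rem:c2}) in general. The only genuine divergence is part (\textit{c}). You prove translation invariance through the representation \eqref{eq:mvrep}: the Hessian slots are untouched since $D^2$ of a linear function vanishes, and the first slot is handled by symmetrizing (invariance in slots $2,\ldots,n$ plus (\textit{b}) moves $K_1$ out of the $h$-position), which indeed closes the gap about the factor $h_{K_1}$ and the identity $\int\langle z_1,x\rangle\,\D(D^2h_{K_2},\ldots,D^2h_{K_n})\,d\omega=0$. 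The paper instead proves (\textit{c}) directly from the definition \eqref{eq:defmv}: since $\lambda_1(K_1+z_1)+\cdots+\lambda_m(K_m+z_m)$ is a translate of $\lambda_1K_1+\cdots+\lambda_mK_m$ and $\Vol(K+z)=\Vol(K)$, the polynomial and hence all its coefficients are unchanged. This one-line argument buys several things your route gives up: it needs no smoothness, no representation formula, no approximation step, and no dependence of (\textit{c}) on (\textit{b}), so the ordering/circularity concern you raise simply does not arise; your route, while valid, first establishes (\textit{c}) only for $C^2_+$ bodies and then must appeal to continuity. The net structure (prove (\textit{c}) before (\textit{d}), then use it to make $h_{K_1}\ge 0$) is the same in both arguments.
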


\begin{proof}
Parts (\textit{a}) and (\textit{b}) follow directly from the definition
\eqref{eq:defmv}. Part (\textit{c}) also follows from
\eqref{eq:defmv} using $\Vol(K)=\Vol(K+z)$. To prove part (\textit{d}), we 
may assume without loss of generality
that $0\in K_1$ by translation-invariance, which implies $h_{K_1}\ge 0$. 
Then part (\textit{d}) follows for $C^2_+$ bodies from
Corollary \ref{cor:mvrep} and Lemma~\ref{lem:mdprop}(\textit{d}),
and for general bodies by approximation (cf.\ Remark \ref{rem:c2}).
\end{proof}

\subsection{Hyperbolic quadratic forms}
\label{sec:2.3}

We conclude with a proof of Lemma \ref{lem:hyper}; we in fact add an 
equivalent condition that will be useful in the proof of Theorem 
\ref{thm:a}.

\begin{lem}[Hyperbolic quadratic forms]
\label{lem:hyperfull}
Let $A$ be a symmetric matrix. Then the following conditions are 
equivalent:
\begin{enumerate}[1.]
\item $\langle x,Ay\rangle^2\ge\langle x,Ax\rangle\langle y,Ay\rangle$
for all $x,y$ such that $\langle y,Ay\rangle \ge 0$.
\item There exists a vector $w$ such that
$\langle x,Ax\rangle\le 0$ for all $x$ such that
$\langle x,Aw\rangle=0$.
\item The positive eigenspace of $A$ has dimension at most one.
\end{enumerate}
The conclusion remains valid if $A$ is a self-adjoint operator
on a Hilbert space with a discrete spectrum, provided the vectors
$x,y,w$ are chosen in the domain of $A$.
\end{lem}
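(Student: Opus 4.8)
The plan is to prove the three conditions equivalent by establishing the cycle $3 \Rightarrow 1 \Rightarrow 2 \Rightarrow 3$, working throughout in the generality of a self-adjoint operator $A$ with discrete spectrum on a Hilbert space $H$; the finite-dimensional case is then a special case. Fix an orthonormal eigenbasis $\{e_i\}$ with eigenvalues $\{\lambda_i\}$, ordered so that $\lambda_1 \ge \lambda_2 \ge \cdots$, and note that since the spectrum is discrete only finitely many $\lambda_i$ can exceed any positive threshold, so ``the positive eigenspace has dimension at most one'' means $\lambda_1$ may be positive but $\lambda_2 \le 0$.

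First I would prove $3 \Rightarrow 1$. Assume $\lambda_2 \le 0$, and suppose $\langle y, Ay\rangle \ge 0$. The clean way to argue: consider the vector $z = \langle y, Ay\rangle x - \langle x, Ay\rangle y$ and compute $\langle z, Az\rangle$; expanding by bilinearity and using symmetry gives $\langle z, Az\rangle = \langle y, Ay\rangle\big(\langle x,Ax\rangle\langle y,Ay\rangle - \langle x,Ay\rangle^2\big)$. So condition 1 is equivalent to the assertion $\langle z, Az\rangle \le 0$ whenever $\langle y, Ay\rangle \ge 0$ (one must handle the degenerate case $\langle y,Ay\rangle = 0$ separately, where condition 1 reduces to $\langle x,Ay\rangle^2 \ge 0$, which is automatic). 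Now observe that $z \perp y$ in the sense that $\langle z, Ay\rangle = 0$. It therefore suffices to show: if $\langle y,Ay\rangle \ge 0$ and $\langle z, Ay\rangle = 0$, then $\langle z, Az\rangle \le 0$. Decompose $y$ and $z$ in the eigenbasis. Since $\lambda_1 \ge 0 \ge \lambda_2 \ge \cdots$, the positive part of the quadratic form lives only along $e_1$. If $\langle y, Ay\rangle \ge 0$ then $y$ must have a nonzero $e_1$-component (unless $y$ lies in the nonpositive eigenspace, in which case $\langle z,Ay\rangle=0$ gives no constraint but $A$ restricted to $e_1^\perp$ is nonpositive, handled below). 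Writing $y = \alpha e_1 + y'$, $z = \beta e_1 + z'$ with $y', z' \in \mathrm{span}\{e_i : i \ge 2\}$, the condition $\langle z, Ay\rangle = 0$ reads $\lambda_1 \alpha\beta + \langle z', Ay'\rangle = 0$. Then $\langle z, Az\rangle = \lambda_1 \beta^2 + \langle z', Az'\rangle$. If $\lambda_1 > 0$ we substitute $\beta = -\langle z', Ay'\rangle/(\lambda_1\alpha)$ and use Cauchy--Schwarz for the \emph{negative} semidefinite form $-A$ restricted to $e_1^\perp$: $\langle z', Ay'\rangle^2 \le \langle z', Az'\rangle\langle y', Ay'\rangle \le \langle z', Az'\rangle \cdot \langle y, Ay\rangle/\ldots$ — more directly, $\lambda_1\beta^2 = \langle z',Ay'\rangle^2/(\lambda_1\alpha^2)$ and $\langle z',Ay'\rangle^2 \le (-\langle z',Az'\rangle)(-\langle y',Ay'\rangle)$, while $\langle y, Ay\rangle \ge 0$ forces $\lambda_1\alpha^2 \ge -\langle y', Ay'\rangle$; combining yields $\lambda_1\beta^2 \le -\langle z', Az'\rangle$, i.e. $\langle z, Az\rangle \le 0$. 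The cases $\lambda_1 \le 0$ (whole form nonpositive) are immediate.

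Next, $1 \Rightarrow 2$ is essentially a definition chase: take any $w$ with $\langle w, Aw\rangle > 0$ if one exists (if no such $w$ exists, $A \le 0$ and any $w$ works, since then condition 3 holds outright and we could even skip to it); then for any $x$ with $\langle x, Aw\rangle = 0$, condition 1 applied with $y = w$ gives $0 = \langle x, Aw\rangle^2 \ge \langle x, Ax\rangle\langle w, Aw\rangle$, and since $\langle w, Aw\rangle > 0$ we get $\langle x, Ax\rangle \le 0$. Finally, $2 \Rightarrow 3$: suppose for contradiction that the positive eigenspace has dimension at least two, i.e. $\lambda_1, \lambda_2 > 0$. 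The subspace $\mathrm{span}\{e_1, e_2\}$ is two-dimensional and $A$ is positive definite on it; the orthogonality condition $\langle x, Aw\rangle = 0$ for the fixed $w$ of condition 2 is a single linear constraint, so it cuts out a subspace of $\mathrm{span}\{e_1,e_2\}$ of dimension at least one, on which we can find $x \ne 0$ with $\langle x, Aw\rangle = 0$ yet $\langle x, Ax\rangle > 0$, contradicting condition 2.

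I expect the main obstacle to be the bookkeeping in $3 \Rightarrow 1$: getting the degenerate cases right (when $\langle y, Ay\rangle = 0$, when $y$ has zero $e_1$-component, when $\lambda_1 = 0$) and making sure the Cauchy--Schwarz step for the negative semidefinite restriction $-A|_{e_1^\perp}$ is invoked with the correct inequality directions. The discrete-spectrum hypothesis enters only to guarantee that ``positive eigenspace of dimension at most one'' is a meaningful finite condition and that the eigenbasis decomposition above converges, so no extra functional-analytic subtlety arises beyond restricting $x, y, w$ to the domain of $A$ so that all quadratic forms $\langle x, Ax\rangle$ are well-defined.
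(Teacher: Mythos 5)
Your proof is correct, but it follows a different route than the paper. The paper proves the cycle $3\Rightarrow 2\Rightarrow 1\Rightarrow 3$: for $3\Rightarrow 2$ it takes $w$ to be the top eigenvector and invokes the variational characterization of the second eigenvalue, and for $2\Rightarrow 1$ it sets $z=x-ay$ with $a=\langle x,Aw\rangle/\langle y,Aw\rangle$ (noting $\langle y,Aw\rangle\ne 0$ when $\langle y,Ay\rangle>0$), so that $\langle z,Aw\rangle=0$ gives $\langle z,Az\rangle\le 0$, and a two-line expansion plus minimization over $a$ yields the reverse Cauchy--Schwarz; finally $1\Rightarrow 3$ is a one-line test on a pair of orthogonal eigenvectors. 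You instead run $3\Rightarrow 1\Rightarrow 2\Rightarrow 3$, attacking the hard implication $3\Rightarrow 1$ head-on via the auxiliary vector $z=\langle y,Ay\rangle x-\langle x,Ay\rangle y$, an eigenbasis decomposition, and Cauchy--Schwarz for the nonpositive part $-A|_{e_1^\perp}$; your $1\Rightarrow 2$ is a definition chase and $2\Rightarrow 3$ a dimension-count contradiction. What the paper's factorization through condition 2 buys is exactly the avoidance of your bookkeeping: condition 2 isolates the one completing-the-square computation, needs no spectral decomposition of $x,y$, and is moreover the form of the lemma reused later (in section 4), whereas your direct argument is self-contained and makes the spectral picture explicit at the cost of the case analysis you flag. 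One caution about phrasing rather than substance: your intermediate claim ``if $\langle y,Ay\rangle\ge 0$ and $\langle z,Ay\rangle=0$ then $\langle z,Az\rangle\le 0$'' is false as a general statement when $\langle y,Ay\rangle=0$ (e.g.\ $A=\diag(1,0)$, $y=e_2$, $z=e_1$), so it must be, as you in effect do, invoked only in the case $\langle y,Ay\rangle>0$ (which also forces $\alpha\ne 0$), with the degenerate case $\langle y,Ay\rangle=0$ dispatched directly as you indicate.
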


\begin{proof}
If $A$ is negative semidefinite, the conclusion is trivial. Let us
therefore assume that $A$ has an eigenvector $v$ with 
positive eigenvalue $\lambda>0$.

$3\Rightarrow 2$: by assumption, the second-largest eigenvalue $\lambda_2$ 
of $A$ is nonpositive, so
$$
	0\ge \lambda_2 = \max\{\langle x,Ax\rangle:
	\|x\|=1,~\langle x,v\rangle=0\}.
$$
As $\lambda \langle x,v\rangle = \langle x,Av\rangle$, we may choose $w=v$.

$2\Rightarrow 1$: assume $\langle y,Ay\rangle>0$ (else
the conclusion is trivial). Then $\langle y,Aw\rangle\ne 0$,
so we may define $z=x-ay$ with 
$a=\langle x,Aw\rangle/\langle y,Aw\rangle$. As $\langle z,Aw\rangle=0$,
we obtain
$$
	0 \ge \langle z,Az\rangle =
	\langle x,Ax\rangle - 2a\langle x,Ay\rangle + a^2\langle y,Ay\rangle
	\ge
	\langle x,Ax\rangle - \frac{\langle x,Ay\rangle^2}{\langle y,Ay\rangle},
$$
where the last inequality is obtained by minimizing over $a$.

$1\Rightarrow 3$: let $u\perp v$ be an eigenvector of $A$ 
with eigenvalue $\mu$. Then we obtain $0 = \langle v,Au\rangle^2
\ge \lambda\mu\|v\|^2\|u\|^2$. As $\lambda>0$, we must have $\mu\le 0$.
\end{proof}

\begin{rem}
The assumption that $A$ has a discrete spectrum ensures that the proof 
extends \emph{verbatim} to the infinite-dimensional setting (for the 
variational characterization of eigenvalues used in the proof of 
$3\Rightarrow 2$, see, e.g., \cite[eq.\ (8.94)]{GT01}). This assumption is 
not really necessary, see \cite[p.\ 184]{BZ88} for a more general 
formulation. However, the present simple formulation suffices for our 
purposes.
\end{rem}

\section{The Alexandrov-Fenchel inequality}
\label{sec:3}

In this section we will prove the Alexandrov-Fenchel inequality assuming 
the validity of Alexandrov's inequality for mixed discriminants. The idea 
of the proof was already explained in section \ref{sec:1bochner}, and it 
remains to spell out the details. 

Throughout this section, we fix $C^\infty_+$ convex bodies 
$C_1,\ldots,C_{n-2}$. For reasons that will become clear shortly, we will 
also assume that $0\in\mathop{\mathrm{int}}C_1$. The latter entails no 
loss of generality: $C^\infty_+$ bodies have nonempty interior, and thus 
we may assume $0\in\mathop{\mathrm{int}}C_1$ by translation-invariance of 
mixed volumes (Lemma \ref{lem:mvprop}(\textit{c})).

We begin by expressing mixed volume as the quadratic form of a suitably 
chosen operator. While the most obvious choice is \eqref{eq:atilde}, we do 
not know much \emph{a priori} about where its eigenvalues are located.
Instead, we will choose a different normalization that fixes the top
eigenvalue. To this end, let us define 
$$
	\mathscr{A}f := 
	\frac{h_{C_1}\D(D^2f,D^2h_{C_1},\ldots,D^2h_{C_{n-2}})}{
	\D(D^2h_{C_1},D^2h_{C_1},\ldots,D^2h_{C_{n-2}})}
$$
for any $C^2$ function $f$.
That is, $\mathscr{A}f$ is obtained by rescaling the operator of
\eqref{eq:atilde} by some positive function. Correspondingly, if we 
define a measure on $S^{n-1}$ by
$$
	d\mu := \frac{1}{n}
	\frac{\D(D^2h_{C_1},D^2h_{C_1},\ldots,D^2h_{C_{n-2}})}
	{h_{C_1}}\,d\omega,
$$
then \eqref{eq:mvrep} can clearly be written as
$$
	\V(K,L,C_1,\ldots,C_{n-2}) =
	\langle h_K,\mathscr{A}h_L\rangle_{L^2(\mu)}
	:= \int h_K\,\mathscr{A}h_L\,d\mu.
$$
Note that all the above objects are well defined, as $h_{C_1}>0$ because 
we assumed $0\in\mathop{\mathrm{int}}C_1$,
and as $\D(D^2h_{C_1},D^2h_{C_1},\ldots,D^2h_{C_{n-2}})>0$ by
Lemma \ref{lem:mdprop}(\textit{e}).

The point of scaling the operator in this manner is that now, by 
definition, $\mathscr{A}h_{C_1}=h_{C_1}$. Thus $\mathscr{A}$ has 
eigenvalue $1$, and an associated eigenvector $h_{C_1}$ that is strictly 
positive. Let us collect a few basic facts about the operator $\mathscr{A}$.
\begin{enumerate}[$\bullet$]
\itemsep\abovedisplayskip
\item $\mathscr{A}$ is a uniformly elliptic operator (it is increasing 
as a function of $D^2f$ in the positive semidefinite order); this follows 
from Lemma \ref{lem:mdprop}(\textit{e}).
\item $\mathscr{A}$ defines a symmetric quadratic form 
$\langle f,\mathscr{A}g\rangle_{L^2(\mu)}=
\langle g,\mathscr{A}f\rangle_{L^2(\mu)}$
for $f,g\in C^2$; this follows from
Lemma \ref{lem:mvprop}(\textit{b}) and Corollary \ref{cor:c2}.
\item $\mathscr{A}$ extends to a self-adjoint operator 
with a discrete spectrum; its largest eigenvalue is $1$ and the
corresponding eigenspace is spanned by $h_{C_1}$; and all its 
eigenfunctions are $C^\infty$. This follows from standard elliptic 
regularity theory \cite[\S 8.12]{GT01}.
\end{enumerate}
These facts may be viewed in essence as an infinite-dimensional analogue 
of the Perron-Frobenius theorem \cite{BR97}: a uniformly elliptic operator 
on a compact manifold behaves much like a positive matrix, in 
particular, it has a unique positive eigenvector and the associated 
eigenvalue is maximal. The use of elliptic operators is convenient but not 
essential; an alternative approach is sketched in section 
\ref{sec:app}.

We now arrive at the key observation of this paper.

\begin{lem}
\label{lem:lich}
For any function $f\in C^2$, we have
$$
	\langle \mathscr{A}f,\mathscr{A}f\rangle_{L^2(\mu)}
	\ge
	\langle f,\mathscr{A}f\rangle_{L^2(\mu)}.
$$
\end{lem}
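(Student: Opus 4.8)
The plan is to reduce the integral inequality to a pointwise inequality between mixed discriminants, namely Alexandrov's inequality (Theorem~\ref{thm:a}), and then integrate. Write $A:=D^2f$, $B:=D^2h_{C_1}$, and $M_i:=D^2h_{C_i}$ for the remaining factors. By Corollary~\ref{cor:c2} we may assume $f$ is a difference of support functions, so $A=D^2f$ is a genuine symmetric matrix field; $B$ and the $M_i$ are positive definite since the $C_i$ are $C^\infty_+$. By definition of $\mathscr{A}$ and of $\mu$, I compute
$$
\langle f,\mathscr{A}f\rangle_{L^2(\mu)} = \frac{1}{n}\int_{S^{n-1}}
\frac{\D(A,A,M_1,\ldots,M_{n-3})\,\D(B,B,M_1,\ldots,M_{n-3})}{\D(B,B,M_1,\ldots,M_{n-3})}\,d\omega
$$
Wait — more carefully: $\mathscr{A}f = h_{C_1}\D(A,B,M_1,\ldots,M_{n-3})/\D(B,B,M_1,\ldots,M_{n-3})$, so using $\V(K,L,C_1,\ldots,C_{n-2})=\langle h_K,\mathscr{A}h_L\rangle$ and the representation \eqref{eq:mvrep} one gets
$$
\langle f,\mathscr{A}f\rangle_{L^2(\mu)}
= \frac1n\int_{S^{n-1}} h_{C_1}\,\D(A,A,M_1,\ldots,M_{n-3})\,\frac{\D(B,B,M_1,\ldots,M_{n-3})}{h_{C_1}\,\D(B,B,M_1,\ldots,M_{n-3})}\cdot h_{C_1}\,\frac{d\omega}{?}
$$
and similarly $\langle\mathscr{A}f,\mathscr{A}f\rangle_{L^2(\mu)}=\frac1n\int_{S^{n-1}} h_{C_1}\,\dfrac{\D(A,B,M_1,\ldots,M_{n-3})^2}{\D(B,B,M_1,\ldots,M_{n-3})}\,d\omega$. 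The upshot (after the bookkeeping is done correctly) is that both sides are integrals against the \emph{same} positive measure, and the claimed inequality $\langle\mathscr{A}f,\mathscr{A}f\rangle\ge\langle f,\mathscr{A}f\rangle$ is exactly the integral of the pointwise inequality
$$
\D(A,B,M_1,\ldots,M_{n-3})^2 \ge \D(A,A,M_1,\ldots,M_{n-3})\,\D(B,B,M_1,\ldots,M_{n-3}),
$$
which holds at every point of $S^{n-1}$ by Theorem~\ref{thm:a}, since $B,M_1,\ldots,M_{n-3}\ge 0$ there.

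I would carry this out in three steps: first, substitute the definitions of $\mathscr{A}$ and $d\mu$ into both inner products and simplify, being careful that the factors of $h_{C_1}$, of $\D(B,B,M_1,\ldots,M_{n-3})$, and of $\tfrac1n$ cancel so that both sides become $\tfrac1n\int_{S^{n-1}} h_{C_1}^{-1}\,(\text{mixed-discriminant expression})\,d\omega$ against the common weight coming from $d\mu$; second, invoke Theorem~\ref{thm:a} pointwise; third, integrate, using that the common weight $h_{C_1}^{-1}\D(B,B,M_1,\ldots,M_{n-3})\,d\omega$ is a positive measure (here is where $0\in\mathop{\mathrm{int}}C_1$, hence $h_{C_1}>0$, and Lemma~\ref{lem:mdprop}(\textit{e}) are used) so the inequality is preserved.

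The main obstacle — really the only non-mechanical point — is the first step: verifying that after substitution the two inner products are genuinely integrals of the two sides of Alexandrov's inequality against one and the same measure, with no leftover factors. This is pure algebra but must be done carefully, since $\mathscr{A}$ appears once on the left and twice (squared) in $\langle\mathscr{A}f,\mathscr{A}f\rangle$, and one must check the denominator $\D(D^2h_{C_1},D^2h_{C_1},\ldots,D^2h_{C_{n-2}})$ in $\mathscr{A}$ cancels exactly against the numerator of $d\mu$. Once this cancellation is confirmed, the rest is immediate from Theorem~\ref{thm:a} and positivity of the weight. A minor point worth stating: if $f$ is merely $C^2$ rather than a difference of support functions, one still has $D^2f$ a symmetric matrix field, and Theorem~\ref{thm:a} applies with $A=D^2f(x)$ arbitrary symmetric (it only requires the \emph{other} arguments to be positive semidefinite), so no further approximation is needed here.
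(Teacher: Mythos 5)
Your overall strategy --- apply Theorem \ref{thm:a} pointwise with $A=D^2f$, $B=D^2h_{C_1}$, $M_i=D^2h_{C_{i+1}}$, then integrate against a positive weight --- is indeed the route the paper takes, and your formula $\langle\mathscr{A}f,\mathscr{A}f\rangle_{L^2(\mu)}=\frac1n\int h_{C_1}\,\D(A,B,M_1,\ldots,M_{n-3})^2/\D(B,B,M_1,\ldots,M_{n-3})\,d\omega$ is correct. But there is a genuine gap at precisely the step you dismiss as ``pure algebra'' and ``bookkeeping''. Direct substitution of the definitions of $\mathscr{A}$ and $\mu$ gives
\[
\langle f,\mathscr{A}f\rangle_{L^2(\mu)}=\frac1n\int_{S^{n-1}} f\,\D(D^2f,D^2h_{C_1},D^2h_{C_2},\ldots,D^2h_{C_{n-2}})\,d\omega,
\]
whose integrand is $f\,\D(A,B,M_1,\ldots,M_{n-3})$. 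On the other hand, multiplying the pointwise inequality of Theorem \ref{thm:a} by $d\mu$ bounds $(\mathscr{A}f)^2\,d\mu$ below by $\frac1n h_{C_1}\,\D(D^2f,D^2f,D^2h_{C_2},\ldots,D^2h_{C_{n-2}})\,d\omega$, whose integrand is $h_{C_1}\,\D(A,A,M_1,\ldots,M_{n-3})$. These two integrands are \emph{not} pointwise equal, and no cancellation of the weights $h_{C_1}$, $\D(B,B,\ldots)$, $\tfrac1n$ will make them so; your first display (the one ending in ``$?$'') cannot be repaired by algebra alone. What is true is that their \emph{integrals} coincide:
\[
\frac1n\int f\,\D(D^2f,D^2h_{C_1},\ldots,D^2h_{C_{n-2}})\,d\omega
=\frac1n\int h_{C_1}\,\D(D^2f,D^2f,D^2h_{C_2},\ldots,D^2h_{C_{n-2}})\,d\omega,
\]
i.e.\ one may exchange the roles of $f$ and $h_{C_1}$. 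This is exactly the symmetry of mixed volumes in all their arguments, extended to general $C^2$ functions by multilinearity --- the paper invokes Corollary \ref{cor:mvrep}, Lemma \ref{lem:mvprop}(\textit{b}), and Corollary \ref{cor:c2} here. That symmetry is not visible in the representation \eqref{eq:mvrep} and required a genuine argument earlier in the paper (Lemma \ref{lem:volvar}); it is the one essential input beyond Theorem \ref{thm:a}, and your proof does not close without stating and using it.

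A smaller point: your closing observation that Theorem \ref{thm:a} applies pointwise for arbitrary symmetric $D^2f$ is correct, so Corollary \ref{cor:c2} is not needed to justify the pointwise step; where it \emph{is} needed is in the exchange identity above, to write $f$ as a (scaled) difference of support functions so that the symmetry of mixed volumes, known for convex bodies, can be transferred to $f$ by multilinearity.
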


\begin{proof}
In the present notation, the statement of Theorem \ref{thm:a} can be 
written as
$$
	(\mathscr{A}f)^2 
	\ge
	h_{C_1}^2
	\frac{\D(D^2f,D^2f,D^2h_{C_2},\ldots,D^2h_{C_{n-2}})}{
	\D(D^2h_{C_1},D^2h_{C_1},\ldots,D^2h_{C_{n-2}})}.
$$
Integrating both sides with respect to $\mu$ yields
\begin{align*}
	\int (\mathscr{A}f)^2\,d\mu &\ge
	\frac{1}{n}
	\int h_{C_1}\D(D^2f,D^2f,D^2h_{C_2},\ldots,D^2h_{C_{n-2}})\,d\omega 
	\\ &= 
	\frac{1}{n}
	\int f\,\D(D^2f,D^2h_{C_1},\ldots,D^2h_{C_{n-2}})\,d\omega =
	\langle f,\mathscr{A}f\rangle_{L^2(\mu)},
\end{align*}
where we used the symmetry of mixed volumes to exchange the role
of $h_{C_1}$ and $f$ (using Corollary \ref{cor:mvrep}, 
Lemma \ref{lem:mvprop}(\textit{b}), and Corollary \ref{cor:c2}).
\end{proof}

The proof of the Alexandrov-Fenchel inequality is now almost immediate.

\begin{proof}[Proof of Theorem \ref{thm:af}]
Let $f$ be an eigenfunction of $\mathscr{A}$ with eigenvalue $\lambda$.
Then Lemma \ref{lem:lich} yields
$\lambda^2\ge\lambda$, so $\lambda\ge 1$ or $\lambda\le 0$. 
Thus the positive eigenspace of $\mathscr{A}$ is spanned by
$h_{C_1}$, and we conclude by invoking Lemma \ref{lem:hyper}.
\end{proof}

\begin{rem}
\label{rem:generalf}
The proof of Theorem \ref{thm:af} shows that $\mathscr{A}$ has a 
one-dimensional positive eigenspace, so the Alexandrov-Fenchel inequality 
follows from Lemma \ref{lem:hyper}. While we did not use this in the 
proof, we stated in the introduction that the Alexandrov-Fenchel 
inequality is in fact \emph{equivalent} to this spectral statement.
This may not be entirely obvious, however, as the Alexandrov-Fenchel
inequality only yields condition 1 of Lemma \ref{lem:hyper}
when $x,y$ are support functions.

For completeness, let us show that the spectral property of $\mathscr{A}$ 
is in fact also a consequence of the Alexandrov-Fenchel 
inequality. Let $f$ be any $C^2$ function. By Corollary \ref{cor:c2}, 
$f+ah_{C_1}$ is a support function for $a$ sufficiently large, so that
$$
	\langle f+ah_{C_1},\mathscr{A}h_{C_1}\rangle_{L^2(\mu)}^2
	\ge
	\langle f+ah_{C_1},\mathscr{A}(f+ah_{C_1})\rangle_{L^2(\mu)}
	\langle h_{C_1},\mathscr{A}h_{C_1}\rangle_{L^2(\mu)}
$$
by the Alexandrov-Fenchel inequality.
Expanding both sides yields
$$
	\langle f,\mathscr{A}h_{C_1}\rangle_{L^2(\mu)}^2 \ge
	\langle f,\mathscr{A}f\rangle_{L^2(\mu)}
	\langle h_{C_1},\mathscr{A}h_{C_1}\rangle_{L^2(\mu)}.
$$
If we now choose $f\perp h_{C_1}$ to be any eigenfunction of $\mathscr{A}$ 
with eigenvalue $\mu$, this inequality shows that $\mu\le 0$, 
establishing the claim.
\end{rem}

\section{Alexandrov's mixed discriminant inequality}
\label{sec:4}

In this section we will prove Theorem \ref{thm:a} using the same method as 
in section \ref{sec:3}. The main new difficulty is that the mixed 
discriminant inequality is an inequality for matrices rather than for 
vectors: as matrix multiplication is noncommutative, it is not clear how 
to define the normalized operator as in the previous section. It turns out 
that a second application of Lemma \ref{lem:hyperfull} allows us to reduce 
the problem to a special case where the relevant matrices are diagonal; 
the latter can be handled by repeating almost verbatim the argument of 
section \ref{sec:3}.

In the present setting, the proof proceeds by induction on the dimension.
Let us first dispose of the base of the induction, which follows from a 
trivial computation.

\begin{lem}
\label{lem:2dmd}
Let $A,B$ be $2\times 2$ matrices. Then $\D(A,B)^2\ge\D(A,A)\,\D(B,B)$.
\end{lem}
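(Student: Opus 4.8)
The plan is to reduce the statement to the case where $B$ is diagonal, where it collapses to a one-line identity; throughout I would use that $B$ is positive semidefinite (as in Theorem~\ref{thm:a}). First, by the spectral theorem write $B=ODO^{*}$ with $O$ orthogonal and $D=\diag(d_1,d_2)$, $d_1,d_2\ge 0$. Since $\D(OM_1O^{*},OM_2O^{*})=\det(OO^{*})\,\D(M_1,M_2)=\D(M_1,M_2)$ by Lemma~\ref{lem:mdprop}(\textit{c}), conjugating both $A$ and $B$ by $O^{*}$ leaves all three mixed discriminants in the statement unchanged, so I may assume $B=\diag(d_1,d_2)$ with $d_1,d_2\ge 0$ and $A$ an arbitrary symmetric matrix, say with diagonal entries $\alpha,\gamma$ and off-diagonal entry $\beta$.

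Next I would compute the mixed discriminants directly from the definition \eqref{eq:defmd}: expanding $\det(sA+tB)=(s\alpha+td_1)(s\gamma+td_2)-s^{2}\beta^{2}$ and matching coefficients gives $\D(B,B)=\det B=d_1d_2$, $\D(A,A)=\det A=\alpha\gamma-\beta^{2}$ (cf.\ Lemma~\ref{lem:mdprop}(\textit{a})), and $2\D(A,B)=\alpha d_2+\gamma d_1$. The inequality is then equivalent to
$$
(\alpha d_2+\gamma d_1)^{2}-4(\alpha\gamma-\beta^{2})d_1d_2=(\alpha d_2-\gamma d_1)^{2}+4\beta^{2}d_1d_2\ge 0,
$$
which holds because $d_1d_2\ge 0$. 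This is the ``trivial computation'' promised in the text.

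I do not expect a genuine obstacle here: the argument is elementary once $B$ has been diagonalized. The two points worth flagging are (i) that the reduction step is precisely the orthogonal-conjugation invariance recorded in Lemma~\ref{lem:mdprop}(\textit{c}), and (ii) that positive semidefiniteness of $B$ is used in an essential way, since it is $d_1d_2\ge 0$ that makes the final bracket nonnegative. As an alternative to the explicit computation, one can deduce the lemma from Lemma~\ref{lem:hyper}: the map $(M_1,M_2)\mapsto\D(M_1,M_2)$ is a symmetric bilinear form on the three-dimensional space of symmetric $2\times 2$ matrices whose associated quadratic form $M\mapsto\det M$ has inertia $(1,2)$ (as $4\det M=(M_{11}+M_{22})^{2}-(M_{11}-M_{22})^{2}-4M_{12}^{2}$), hence a one-dimensional positive eigenspace; since $\D(B,B)=\det B\ge 0$ for $B\ge 0$, condition~1 of Lemma~\ref{lem:hyper} gives the claim.
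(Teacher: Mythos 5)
Your proof is correct, and it is in the same spirit as the paper's: both reduce the claim via Lemma \ref{lem:mdprop}(\textit{c}) to a situation where the matrices are diagonal enough that the inequality becomes a one-line computation. The reductions differ slightly, in a way worth recording. The paper conjugates by $U=B^{-1/2}$ to normalize $B=I$ and then diagonalizes $A$, so the final inequality is just $(a_{11}+a_{22})^2\ge 4a_{11}a_{22}$; this presupposes $B>0$, with semidefinite $B$ handled by the usual $B+\varepsilon I$ perturbation (as in the corollary of section \ref{sec:4}). You instead orthogonally diagonalize $B$ and keep $A$ general, which costs a marginally longer but still trivial computation, $(\alpha d_2-\gamma d_1)^2+4\beta^2 d_1d_2\ge 0$, and covers singular $B\ge 0$ directly with no limiting argument. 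You are also right to make the hypothesis $B\ge 0$ explicit: some such assumption is genuinely needed (for $A=\diag(1,-1)$ and $B$ with zero diagonal and off-diagonal entries equal to $1$ one has $\D(A,B)=0$ while $\D(A,A)\,\D(B,B)=1$), and the lemma as stated is to be read with the standing assumptions of Theorem \ref{thm:a}, exactly as you did. Your alternative argument via Lemma \ref{lem:hyper} --- the determinant form on $2\times 2$ symmetric matrices has signature $(1,2)$, hence a one-dimensional positive eigenspace, and $\D(B,B)=\det B\ge 0$ --- is also valid, and is in fact closer in spirit to how the paper runs the induction step, whereas its treatment of this base case is purely computational.
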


\begin{proof}
The general case is reduced to the case $B=I$ by applying Lemma 
\ref{lem:mdprop}(\textit{c}) with $U=B^{-1/2}$ to both
sides of the inequality. Moreover, by an appropriate choice of basis,
we may assume without loss of generality that $A$ is diagonal.
Then we have $\det(A+tI)=(a_{11}+t)(a_{22}+t)$, so
$\D(A,I)=\tfrac{1}{2}(a_{11}+a_{22})$ and $\D(A,A)=a_{11}a_{22}$.
Thus the desired inequality $(a_{11}+a_{22})^2\ge 4a_{11}a_{22}$
is elementary.
\end{proof}

We now proceed with the induction argument: in the remainder of this 
section we assume that Theorem \ref{thm:a} is valid for 
$(n-1)$-dimensional matrices (for $n\ge 3$), and we will show that it must 
also be valid for $n$-dimensional matrices.

We begin by proving a ``commutative'' special case: note that the 
quadratic form in the following proof acts on vectors rather than 
matrices.

\begin{lem}
\label{lem:mdcommut}
Let $n\ge 3$ and let $M_2,\ldots,M_{n-2}$ be $n$-dimensional positive 
definite matrices.
Then for any $n$-dimensional \emph{diagonal} matrix $Z$, we have
$$
	\D(Z,I,I,M_2,\ldots,M_{n-2})^2
	\ge 
	\D(Z,Z,I,M_2,\ldots,M_{n-2})\,
	\D(I,I,I,M_2,\ldots,M_{n-2}).
$$
\upn{(}When $n=3$, the statement should be read as 
$\D(Z,I,I)^2\ge\D(Z,Z,I)\,\D(I,I,I)$.\upn{)}
\end{lem}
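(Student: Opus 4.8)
The plan is to mirror the proof of the Alexandrov--Fenchel inequality in section~\ref{sec:3}, with the second-order elliptic operator on $S^{n-1}$ replaced by a finite-dimensional Perron--Frobenius operator on $\mathbb{R}^n$. I would identify each $n$-dimensional diagonal matrix $Z=\diag(z_1,\ldots,z_n)$ with the vector $z=(z_1,\ldots,z_n)\in\mathbb{R}^n$. For $i=1,\ldots,n$ set $\mu_i:=\D(e_ie_i^*,I,I,M_2,\ldots,M_{n-2})$, where $\{e_i\}$ is the standard basis of $\mathbb{R}^n$; by Lemma~\ref{lem:mdprop}(\textit{f}) this equals a positive multiple of the $(n-1)$-dimensional mixed discriminant $\D(I^{\langle i\rangle},I^{\langle i\rangle},M_2^{\langle i\rangle},\ldots,M_{n-2}^{\langle i\rangle})$, which is strictly positive by Lemma~\ref{lem:mdprop}(\textit{e}) since $I^{\langle i\rangle},M_2^{\langle i\rangle},\ldots,M_{n-2}^{\langle i\rangle}$ are all positive definite. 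Equip $\mathbb{R}^n$ with the inner product $\langle u,v\rangle_\mu:=\sum_i\mu_iu_iv_i$ and define $\mathscr{A}$ by $(\mathscr{A}z)_i:=\D(e_ie_i^*,Z,I,M_2,\ldots,M_{n-2})/\mu_i$. By multilinearity of $\D$ in its first argument, $\langle w,\mathscr{A}z\rangle_\mu=\D(W,Z,I,M_2,\ldots,M_{n-2})$, so $\mathscr{A}$ is self-adjoint on $(\mathbb{R}^n,\langle\cdot,\cdot\rangle_\mu)$ by symmetry of mixed discriminants, and $\mathscr{A}\mathbf{1}=\mathbf{1}$ for $\mathbf{1}=(1,\ldots,1)$ (which corresponds to $I$).

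Next I would record the Perron--Frobenius properties of $\mathscr{A}$ that here play the role of elliptic regularity. In the standard basis $\mathscr{A}$ has entries $P_{ij}=\D(e_ie_i^*,e_je_j^*,I,M_2,\ldots,M_{n-2})/\mu_i$, which are nonnegative by Lemma~\ref{lem:mdprop}(\textit{d}) and, for $i\ne j$, strictly positive: Lemma~\ref{lem:mdprop}(\textit{f}) rewrites the numerator as a positive multiple of $\D((e_je_j^*)^{\langle i\rangle},I^{\langle i\rangle},M_2^{\langle i\rangle},\ldots,M_{n-2}^{\langle i\rangle})$, a mixed discriminant of one nonzero rank-one positive semidefinite matrix together with positive definite matrices, hence positive by Lemma~\ref{lem:mdprop}(\textit{e}). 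Thus $\mathscr{A}$ is an irreducible row-stochastic matrix, so by the Perron--Frobenius theorem $1$ is its spectral radius, it is a simple eigenvalue, and $\mathbf{1}$ spans the corresponding eigenspace.

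The heart of the argument is the Bochner-type inequality $\langle\mathscr{A}z,\mathscr{A}z\rangle_\mu\ge\langle z,\mathscr{A}z\rangle_\mu$, which here comes from the induction hypothesis rather than from Theorem~\ref{thm:a} itself. Using Lemma~\ref{lem:mdprop}(\textit{f}), $(\mathscr{A}z)_i=\D(Z^{\langle i\rangle},I^{\langle i\rangle},M_2^{\langle i\rangle},\ldots,M_{n-2}^{\langle i\rangle})/\D(I^{\langle i\rangle},I^{\langle i\rangle},M_2^{\langle i\rangle},\ldots,M_{n-2}^{\langle i\rangle})$ with positive denominator; applying Theorem~\ref{thm:a} in dimension $n-1$ (which for $n=3$ is Lemma~\ref{lem:2dmd}) with $A=Z^{\langle i\rangle}$, $B=I^{\langle i\rangle}$ and the positive semidefinite matrices $M_2^{\langle i\rangle},\ldots,M_{n-2}^{\langle i\rangle}$, then dividing and converting back with (\textit{f}), gives $(\mathscr{A}z)_i^2\ge\D(e_ie_i^*,Z,Z,M_2,\ldots,M_{n-2})/\mu_i$. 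Multiplying by $\mu_i$ and summing over $i$, linearity in the first argument and $\sum_ie_ie_i^*=I$ yield $\langle\mathscr{A}z,\mathscr{A}z\rangle_\mu\ge\D(I,Z,Z,M_2,\ldots,M_{n-2})=\langle z,\mathscr{A}z\rangle_\mu$, using symmetry of mixed discriminants at the end.

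Finally, plugging an eigenvector of $\mathscr{A}$ with eigenvalue $\lambda$ into the Bochner inequality gives $\lambda^2\ge\lambda$, so every eigenvalue lies in $(-\infty,0]\cup[1,\infty)$; combined with the fact that $1$ is the spectral radius and simple, every eigenvalue is $\le 0$ except for the simple eigenvalue $1$, whose eigenspace is $\mathbb{R}\mathbf{1}$. Hence the positive eigenspace of $\mathscr{A}$ is one-dimensional, so, since $\langle\mathbf{1},\mathscr{A}\mathbf{1}\rangle_\mu=\D(I,I,I,M_2,\ldots,M_{n-2})>0$, Lemma~\ref{lem:hyper} (or Lemma~\ref{lem:hyperfull}) applied with $x=z$ and $y=\mathbf{1}$ gives $\langle z,\mathscr{A}\mathbf{1}\rangle_\mu^2\ge\langle z,\mathscr{A}z\rangle_\mu\,\langle\mathbf{1},\mathscr{A}\mathbf{1}\rangle_\mu$, which unwinds to exactly the asserted inequality. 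The main obstacle, relative to section~\ref{sec:3}, is that the property ``the top eigenvalue is $1$ and simple'' is no longer supplied by elliptic regularity and must instead be extracted from the positivity properties of mixed discriminants (Lemma~\ref{lem:mdprop}(\textit{d}),(\textit{e})) via Perron--Frobenius; beyond that one only needs to keep careful track of the number of arguments and of the dimension-reduction bookkeeping in (\textit{f}) when passing between $n$- and $(n-1)$-dimensional mixed discriminants.
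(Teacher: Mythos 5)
Your proposal is correct and follows essentially the same route as the paper's proof: the same weighted inner product $p_i\propto\D(I^{\langle i\rangle},I^{\langle i\rangle},M_2^{\langle i\rangle},\ldots,M_{n-2}^{\langle i\rangle})$, the same normalized operator with $\mathbf{1}$ as top eigenvector, the same use of the $(n-1)$-dimensional induction hypothesis to get $\langle\mathscr{A}z,\mathscr{A}z\rangle\ge\langle z,\mathscr{A}z\rangle$, and the same conclusion via the eigenvalue dichotomy and Lemma \ref{lem:hyperfull}. Your justification of the Perron--Frobenius step (irreducible, row-stochastic, strictly positive off-diagonal entries) is if anything slightly more careful than the paper's, which invokes positivity of the matrix directly.
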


\begin{proof}
Define for $x,y\in\mathbb{R}^n$ the quadratic form
\begin{align*}
	Q(x,y) &:=
	\D(\diag(x),\diag(y),I,M_2,\ldots,M_{n-2}) \\
	&\phantom{:}=
	\frac{1}{n}
	\sum_{i=1}^{n} x_i\,
	\D(\diag(y)^{\langle i\rangle},I^{\langle i\rangle},
	M_2^{\langle i\rangle},\ldots,M_{n-2}^{\langle i\rangle}),
\end{align*}
where we used Lemma 
\ref{lem:mdprop}(\textit{f}) (recall that
$M^{\langle i\rangle}$ is the $(n-1)$-dimensional matrix
obtained from the $n$-dimensional matrix $M$ by
removing its $i$th row and column). This formula will play the 
role of \eqref{eq:mvrep} in the present setting.

We now proceed as in section \ref{sec:3}.
Define the $n\times n$ matrix $A$ and $p\in\mathbb{R}^n$ by
\begin{align*}
	(Ay)_i &:= 
	\frac{
	\D(\diag(y)^{\langle i\rangle},I^{\langle i\rangle},
        M_2^{\langle i\rangle},\ldots,M_{n-2}^{\langle i\rangle})
	}{
	\D(I^{\langle i\rangle},I^{\langle i\rangle},
        M_2^{\langle i\rangle},\ldots,M_{n-2}^{\langle i\rangle})
	},\\
	p_i &:= \frac{1}{n}
	\D(I^{\langle i\rangle},I^{\langle i\rangle},
        M_2^{\langle i\rangle},\ldots,M_{n-2}^{\langle i\rangle})
\end{align*}
for $y\in\mathbb{R}^n$.
Then $Q(x,y) = \langle x,Ay\rangle_{\ell^2(p)}$, where
$\langle x,y\rangle_{\ell^2(p)}:=\sum_i x_iy_ip_i$. As $Q(x,y)$ is
symmetric, $A$ is self-adjoint on $\ell^2(p)$. Moreover, clearly 
$A1=1$. Finally, note that $A$ is a positive matrix by Lemma 
\ref{lem:mdprop}(\textit{e}). Therefore, by the Perron-Frobenius
theorem \cite[Theorem 1.4.4]{BR97}, $A$ has largest eigenvalue $1$ and
this eigenvalue is simple.

Now recall that we assumed the validity of Theorem \ref{thm:a} for 
$(n-1)$-dimensional matrices. The latter implies, exactly as in
the proof of Lemma \ref{lem:lich}, that
$$
	(Ay)_i^2p_i \ge 
	\frac{1}{n}
	\D(\diag(y)^{\langle i\rangle},\diag(y)^{\langle i\rangle},
        M_2^{\langle i\rangle},\ldots,M_{n-2}^{\langle i\rangle}).
$$
Summing both sides over $i$ and applying Lemma 
\ref{lem:mdprop}(\textit{f}) yields
$$
	\langle Ay,Ay\rangle_{\ell^2(p)}
	\ge
	\D(I,\diag(y),\diag(y),M_2,\ldots,M_{n-2})
	=
	\langle y,Ay\rangle_{\ell^2(p)}.
$$
By choosing $y$ to be an eigenvector of $A$, we find that any eigenvalue 
$\lambda$ of $A$ 
satisfies $\lambda^2\ge\lambda$, so $\lambda\ge 1$ or $\lambda\le 0$.
But as $1$ is the maximal eigenvalue and this eigenvalue is simple,
we have shown that $A$ has a one-dimensional positive eigenspace.
Therefore, Lemma \ref{lem:hyperfull}($3\Rightarrow 1$) implies the desired 
conclusion $Q(x,1)^2\ge Q(x,x)\,Q(1,1)$.
\end{proof}

It remains to show that the mixed discriminant inequality for arbitrary
$n$-dimensional matrices can be reduced to the special case of Lemma 
\ref{lem:mdcommut}.

\begin{cor}
Let $n\ge 3$ and let $B,M_1,\ldots,M_{n-2}$ be $n$-dimensional positive 
semidefinite matrices.
Then for any $n$-dimensional symmetric matrix $A$, we have
$$
	\D(A,B,M_1,\ldots,M_{n-2})^2 \ge
	\D(A,A,M_1,\ldots,M_{n-2})\,\D(B,B,M_1,\ldots,M_{n-2}).
$$
\end{cor}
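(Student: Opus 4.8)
The plan is to reduce the general mixed discriminant inequality, by a sequence of elementary reductions, to the diagonal case handled in Lemma \ref{lem:mdcommut}. First I would use Lemma \ref{lem:mdprop}(\textit{c}) to normalize the auxiliary matrices: since the inequality is a statement about the quadratic form $(A,B)\mapsto\D(A,B,M_1,\ldots,M_{n-2})$, and conjugation by an invertible $U$ multiplies every mixed discriminant by the same factor $\det(UU^*)>0$, I may replace $A,B,M_i$ by $UAU^*,UBU^*,UM_iU^*$ without changing the validity of the inequality. A first issue is that the $M_i$ are only assumed positive semidefinite, not definite; as in the proof of Lemma \ref{lem:mdcommut} I would first add $\varepsilon I$ to each $M_i$, prove the inequality for the perturbed matrices, and then let $\varepsilon\downarrow 0$, using continuity of $\D$ (multilinearity makes this immediate). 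So I may assume $M_1,\ldots,M_{n-2}>0$.

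Next, using $U=M_1^{-1/2}$, I reduce to the case $M_1=I$. Now I want to diagonalize, but here is the subtlety: a single orthogonal conjugation can diagonalize $B$ (or $A$), but not both simultaneously. The right move is to diagonalize $B$: choose orthogonal $O$ so that $OBO^*$ is diagonal; conjugating by $O$ keeps $M_1=I$ fixed (orthogonal conjugation preserves $I$) and leaves $A$ an arbitrary symmetric matrix. So I may assume $M_1=I$ and $B=\diag(b)$ with $b\ge 0$ entrywise. If all $b_i>0$, a further diagonal conjugation by $U=\diag(b)^{-1/2}$ reduces to $B=I$ as well (again preserving $M_1=I$); the degenerate case where some $b_i=0$ is again handled by the $\varepsilon$-perturbation $B\mapsto B+\varepsilon I$ and a limit. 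Thus it suffices to prove
$$
\D(A,I,I,M_2,\ldots,M_{n-2})^2 \ge \D(A,A,I,M_2,\ldots,M_{n-2})\,\D(I,I,I,M_2,\ldots,M_{n-2})
$$
for arbitrary symmetric $A$ and positive definite $M_2,\ldots,M_{n-2}$.

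The final step is to get rid of the off-diagonal part of $A$. Here I would argue as in the proof of Lemma \ref{lem:hyperfull}($2\Rightarrow1$) / the reverse Cauchy–Schwarz mechanism: the inequality above says precisely that the symmetric bilinear form $\Phi(X,Y):=\D(X,Y,I,M_2,\ldots,M_{n-2})$ on the space of symmetric matrices is hyperbolic at the point $I$, i.e.\ $\Phi(A,I)^2\ge\Phi(A,A)\,\Phi(I,I)$. By the polarization/quadratic argument it is enough to show $\Phi(A,A)\le 0$ whenever $\Phi(A,I)=0$, after subtracting the appropriate multiple of $I$; and one checks that $\Phi(A,I)=\tfrac{1}{n}\sum_i\D(A^{\langle i\rangle},\ldots)$ plus, using Lemma \ref{lem:mdprop}(\textit{f}) to expand $\D(\cdot,I,\ldots)$, that $\Phi$ restricted to the diagonal matrices is exactly the form $Q$ of Lemma \ref{lem:mdcommut}. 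So what remains is to show that the off-diagonal entries of $A$ only \emph{help}: concretely, writing $A=\diag(a)+A_{\mathrm{off}}$, one shows $\Phi(A,I)=\Phi(\diag(a),I)=Q(a,\mathbf 1)$ (since $\D(e_ie_j^*+e_je_i^*,I,\ldots)$ contributes $0$ when expanded via Lemma \ref{lem:mdprop}(\textit{f}), as removing row/column $i$ or $j$ kills the rank-one cross term) while $\Phi(A,A)=\Phi(\diag(a),\diag(a))+\Phi(A_{\mathrm{off}},A_{\mathrm{off}})=Q(a,a)+\Phi(A_{\mathrm{off}},A_{\mathrm{off}})$, and the cross term $\Phi(A_{\mathrm{off}},A_{\mathrm{off}})$ is a sum of terms $\D(e_ie_j^*+e_je_i^*,e_ie_j^*+e_je_i^*,I,M_2,\ldots,M_{n-2})$ which are $\le0$ — this is the analogue, in the diagonalized setting, of the fact that $\D(vv^*-ww^*,vv^*-ww^*,\text{positive})\le 0$, a one-dimensional consequence of \eqref{eq:rank1} and multilinearity. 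Granting this, $\Phi(A,A)\le Q(a,a)$ while $\Phi(A,I)=Q(a,\mathbf1)$ and $\Phi(I,I)=Q(\mathbf1,\mathbf1)$, so Lemma \ref{lem:mdcommut} gives $\Phi(A,I)^2=Q(a,\mathbf1)^2\ge Q(a,a)\,Q(\mathbf1,\mathbf1)\ge\Phi(A,A)\,\Phi(I,I)$, as desired.

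The main obstacle I anticipate is precisely this last point — verifying cleanly that the off-diagonal part of $A$ contributes a non-positive term to $\Phi(A,A)$ and nothing to $\Phi(A,I)$. Everything else (the conjugation normalizations, the semidefinite-to-definite perturbation, and the reduction $B\to I$) is routine bookkeeping with Lemma \ref{lem:mdprop}. I expect the off-diagonal computation to follow from expanding the rank-one matrices $e_ie_j^*+e_je_i^* = \tfrac12(e_i+e_j)(e_i+e_j)^* - \tfrac12(e_i-e_j)(e_i-e_j)^*$ and applying \eqref{eq:rank1} together with multilinearity, but it will require a little care to handle the interaction with the fixed positive definite matrices $M_2,\ldots,M_{n-2}$ correctly.
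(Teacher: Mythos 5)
Your overall skeleton (perturb the $M_i$ to be positive definite, normalize $M_1=I$ via Lemma \ref{lem:mdprop}(\textit{c}), verify condition 2 of Lemma \ref{lem:hyperfull} at $w=I$, and feed Lemma \ref{lem:mdcommut} into Lemma \ref{lem:hyperfull}($2\Rightarrow 1$)) is exactly the paper's, but the step you yourself flag as the main obstacle is a genuine gap, and it cannot be repaired in the form you propose. The claimed identities $\Phi(A,I)=\Phi(\diag(a),I)$ and $\Phi(A,A)=\Phi(\diag(a),\diag(a))+\Phi(A_{\mathrm{off}},A_{\mathrm{off}})$ are false unless \emph{all} of $M_2,\ldots,M_{n-2}$ are diagonal, which your reductions cannot arrange (only $M_1$ can be normalized; the remaining $M_k$ stay general positive definite). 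Concretely, for $n=4$ one computes, e.g.\ via $\det(X+t\,uu^*)=\det X+t\langle u,\mathrm{adj}(X)u\rangle$ with $u=e_1\pm e_2$, that
\begin{equation*}
\D(e_1e_2^*+e_2e_1^*,\,I,\,I,\,M)\;=\;-\tfrac{1}{6}M_{12},
\end{equation*}
which is nonzero whenever $M_{12}\neq 0$; so the off-diagonal part of $A$ does contribute to $\Phi(A,I)$, and likewise the diagonal/off-diagonal cross terms in $\Phi(A,A)$ do not vanish. (Your expansion via Lemma \ref{lem:mdprop}(\textit{f}) only kills the cross term when the deleted index is $i$ or $j$; for the other indices it survives.) Also, a smaller slip: conjugating by $U=\diag(b)^{-1/2}$ does \emph{not} preserve $M_1=I$ (it turns it into $\diag(b)^{-1}$), though that reduction of $B$ to $I$ is in any case unnecessary, since once condition 2 of Lemma \ref{lem:hyperfull} is verified the implication $2\Rightarrow 1$ handles every positive semidefinite $B$ directly (as $\D(B,B,I,M_2,\ldots,M_{n-2})\ge 0$ by Lemma \ref{lem:mdprop}(\textit{d})).

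The missing idea, which is the paper's actual argument and is much simpler than an off-diagonal bookkeeping, is to diagonalize the \emph{test matrix} rather than trying to decompose it: to check that $\mathbf{Q}(Z,I)=0$ implies $\mathbf{Q}(Z,Z)\le 0$ for a given symmetric $Z$, pick an orthogonal $O$ with $OZO^*$ diagonal and apply Lemma \ref{lem:mdprop}(\textit{c}) with $U=O$. This costs nothing ($\det(OO^*)=1$), fixes the identity in both the direction slot and the $M_1$ slot, and merely rotates $M_2,\ldots,M_{n-2}$ into other positive definite matrices --- which is harmless precisely because Lemma \ref{lem:mdcommut} is stated for \emph{arbitrary} positive definite $M_2,\ldots,M_{n-2}$. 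Then Lemma \ref{lem:mdcommut} together with $\D(I,I,I,M_2,\ldots,M_{n-2})>0$ (Lemma \ref{lem:mdprop}(\textit{e})) gives $\mathbf{Q}(Z,Z)\le 0$ whenever $\mathbf{Q}(Z,I)=0$, for every symmetric $Z$, and Lemma \ref{lem:hyperfull}($2\Rightarrow 1$) finishes the proof. In short: the change of basis should be performed separately for each $Z$ being tested in the hyperbolicity condition, not once and for all for $A$ and $B$ simultaneously.
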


\begin{proof}
We may assume without loss of generality that $M_1,\ldots,M_{n-2}$ are 
positive definite (otherwise, replace $M_i$ by $M_i+\varepsilon I$ and let 
$\varepsilon\to 0$ at the end). Moreover, applying Lemma 
\ref{lem:mdprop}(\textit{c}) with $U=M_1^{-1/2}$, we may assume that 
$M_1=I$.

We now define the quadratic form $\mathbf{Q}(Z,Z'):= 
\D(Z,Z',I,M_2,\ldots,M_{n-2})$ 
on the space of $n$-dimensional symmetric matrices. It follows immediately 
from Lemma \ref{lem:mdcommut} and Lemma \ref{lem:mdprop}(\textit{e}) 
that $\mathbf{Q}(Z,I)=0$ implies $\mathbf{Q}(Z,Z)\le 0$
for any diagonal matrix $Z$. The same conclusion follows for any 
symmetric matrix $Z$, as we may always reduce to the diagonal case
by a change of basis. Thus
$\mathbf{Q}(A,B)^2\ge \mathbf{Q}(A,A)\,\mathbf{Q}(B,B)$
by Lemma \ref{lem:hyperfull}($2\Rightarrow 1$), which
concludes the proof.
\end{proof}

\section{An alternative approach using polytopes}
\label{sec:app}

Two different approaches to the proof of the Alexandrov-Fenchel inequality 
appear already in Alexandrov's work. One approach \cite{Ale38} focuses 
attention on smooth bodies, which gives rise to elliptic operators. The 
other (historically earlier) approach \cite{Ale37} is to focus instead on 
polytopes. Because polytopes have a finite number of normal directions, 
the role of elliptic operators is replaced here by finite-dimensional 
matrices. The latter may be considered more ``elementary'', in that the 
proof requires in principle only linear algebra and basic geometry.

The present authors find computations with polytopes somewhat less clean 
and intuitive than the smooth approach. Nonetheless, the polytope method 
is of interest in its own right. The aim of this section is to sketch how 
our methods may be implemented in the polytope setting. The following 
discussion is not fully self-contained; we refer to \cite{Sch14} for 
proofs of the basic polytope representations of mixed volumes, and focus 
on adapting the our methods to this context.

Let $P_1,\ldots,P_n$ be polytopes in $\mathbb{R}^n$. We denote by
$F(P,u)$ the face of the polytope $P$ with normal direction $u\in S^{n-1}$.
The following expression\footnote{
	By definition $F(P_i,u)$, $i=2,\ldots,n$
	all lie in the $(n-1)$-dimensional space $u^\perp
	\subset\mathbb{R}^n$ modulo translation. By a slight abuse
	of notation, we denote by $\V(F(P_2,u),\ldots,F(P_n,u))$ the
	$(n-1)$-dimensional mixed volume of the translated faces in
	$u^\perp$ (cf.\ Remark \ref{rem:mdlowdim}).
}
is the analogue for polytopes of
the representation \eqref{eq:mvrep} of mixed volumes of $C^2_+$ bodies
\cite[(5.23)]{Sch14}:
\begin{equation}
\label{eq:polymvrep}
	\V(P_1,\ldots,P_n) = 
	\frac{1}{n}\sum_{u\in S^{n-1}}
	h_{P_1}(u)\,\V(F(P_2,u),\ldots,F(P_n,u)).
\end{equation}
Implicit in the notation is that 
$\V(F(P_2,u),\ldots,F(P_n,u))$ is nonzero only at a finite number of
points $u$ on the sphere; it suffices to restrict the sum to 
the normal directions of the facets ($(n-1)$-dimensional faces) of 
$P_2+\cdots+P_n$.

We would like to think of the restriction of $h_{P_i}$ to the relevant 
normal directions as finite-dimensional vectors, and of mixed volume as a 
quadratic form of such vectors. The problem with \eqref{eq:polymvrep} is 
that $\V(F(P_2,u),\ldots,F(P_n,u))$ is not naturally expressed in terms of 
$h_{P_2}$, but rather in terms of $h_{F(P_2,u)}$. It is therefore unclear 
how we may view \eqref{eq:polymvrep} as a quadratic form of the support 
vectors of the original polytopes. It turns out that this can be done, and 
that one can recover various properties of mixed volumes that appeared 
naturally in the smooth setting, if one restricts attention to certain 
``nice'' families of polytopes.

In the following, we will call polytopes $P_1,\ldots,P_n$ \emph{strongly 
isomorphic} if
$$
	\dim F(P_1,u)=\dim F(P_2,u) =\cdots=\dim F(P_n,u)
	\mbox{ for all }u\in S^{n-1}.
$$
In this setting, the sum in \eqref{eq:polymvrep} ranges over the common 
normal directions $\Omega$ of the facets of $P_i$, and $h_{F(P_i,u)}$ is 
a linear function (independent of $i$) of the restriction of $h_{P_i}$ to 
$\Omega$ \cite[p.\ 276]{Sch14}. We 
also recall that a polytope $P$ in $\mathbb{R}^n$ is called \emph{simple} 
if it has nonempty interior and each vertex is contained in exactly $n$ 
facets.

\begin{lem}
\label{lem:iso}
Let $P_3,\ldots,P_n$ be simple strongly isomorphic polytopes in 
$\mathbb{R}^n$, 
and let $\Omega\subset S^{n-1}$ be the common normal directions of facets 
of $P_i$. Denote by $\uh_{P_i}:=(h_{P_i}(u))_{u\in\Omega} 
\in\mathbb{R}^{|\Omega|}$ the support vector of $P_i$. Then:
\begin{enumerate}[(a)]
\itemsep\abovedisplayskip
\item For every $x\in\mathbb{R}^{|\Omega|}$ and polytope $P$ 
strongly isomorphic to $P_i$, there is a polytope $Q$ strongly 
isomorphic to $P_i$ and $a>0$ such that $x=a(\uh_Q-\uh_P)$.
\item There is a $|\Omega|$-dimensional symmetric matrix $\tilde A$ such that
$$
	(\tilde A \uh_P)_u=\frac{1}{n}\V(F(P,u),F(P_3,u),\ldots,F(P_n,u))
$$
for every $u\in\Omega$ and polytope $P$ strongly isomorphic to $P_i$.
\item $\tilde A = L+D$ for an irreducible 
nonnegative matrix $L$ and diagonal matrix $D$.
\end{enumerate}
Moreover, any family of convex bodies $C_1,\ldots,C_n$ can be approximated 
arbitrarily well in the Hausdorff metric by simple strongly isomorphic 
polytopes $P_1,\ldots,P_n$.
\end{lem}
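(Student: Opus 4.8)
The plan is to verify the three parts (a)--(c) in order, relying on the structural facts about strongly isomorphic simple polytopes that are recalled just before the lemma, and then to handle the approximation statement separately. For part (a), I would recall that the set of support vectors $\uh_Q$ of polytopes $Q$ strongly isomorphic to a fixed simple polytope $P_i$ is, by the theory of strongly isomorphic polytopes (see \cite[\S 2.4]{Sch14}), an open convex cone $\mathcal{C}\subset\mathbb{R}^{|\Omega|}$, which moreover spans all of $\mathbb{R}^{|\Omega|}$ (one may perturb the facet positions independently). This is the exact finite-dimensional analogue of Corollary \ref{cor:c2}: since $\mathcal{C}$ is open and $\uh_P\in\mathcal{C}$, for any $x$ the vector $\uh_P+\epsilon x$ lies in $\mathcal{C}$ for $\epsilon>0$ small, so it equals $\uh_Q$ for some strongly isomorphic $Q$, and then $x=\epsilon^{-1}(\uh_Q-\uh_P)$ with $a=\epsilon^{-1}$.

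For part (b), the key point is the one recalled in the paragraph preceding the lemma: for strongly isomorphic polytopes, $h_{F(P,u)}$ is a \emph{linear} function of the restriction of $h_P$ to $\Omega$, and this linear map does not depend on which strongly isomorphic polytope we use. Consequently, for each $u\in\Omega$, the quantity $\V(F(P,u),F(P_3,u),\ldots,F(P_n,u))$ is, by multilinearity of the $(n-1)$-dimensional mixed volume (Lemma \ref{lem:mvprop}(\textit{b})) composed with that linear map, a linear functional of $\uh_P$; call it $(\tilde A\uh_P)_u$. This defines a matrix $\tilde A$ by part (a) (every $x\in\mathbb{R}^{|\Omega|}$ is a scalar multiple of a difference of two support vectors, so $\tilde A$ is determined on all of $\mathbb{R}^{|\Omega|}$). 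Symmetry of $\tilde A$ is then exactly the polytope analogue of the symmetry used throughout section \ref{sec:3}: it follows from the symmetry of mixed volumes applied to the representation \eqref{eq:polymvrep}, namely $\langle \uh_P,\tilde A\uh_Q\rangle = n\,\V(P,Q,P_3,\ldots,P_n)=n\,\V(Q,P,P_3,\ldots,P_n)=\langle \uh_Q,\tilde A\uh_P\rangle$ for all strongly isomorphic $P,Q$, which pins down symmetry on a spanning set and hence everywhere, again via part (a).

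For part (c), I would decompose $(\tilde A\uh_P)_u$ according to Lemma \ref{lem:mdprop}-type reductions (now in the convex-geometric guise): writing $F(P,u)$ in terms of its own facets within the hyperplane $u^\perp$, one sees that $(\tilde A\uh_P)_u$ is a nonnegative combination of $h_P(u)$ and of the values $h_P(u')$ for those $u'\in\Omega$ such that $F(P,u)\cap F(P,u')$ is a face of codimension two --- i.e., $u,u'$ are "neighbors". This gives $\tilde A=L+D$ with $D$ diagonal (the $h_P(u)$ coefficient) and $L$ nonnegative with $L_{uu'}>0$ precisely for neighboring $u,u'$. Irreducibility of $L$ is the statement that the facet-adjacency graph of the simple polytope $P_3+\cdots+P_n$ (equivalently, the graph on $\Omega$ with edges between neighbors) is connected, which is standard for the boundary complex of a polytope with nonempty interior. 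The nonnegativity of the off-diagonal entries is the polytope shadow of Lemma \ref{lem:mdprop}(\textit{d}); strict positivity on edges is the shadow of Lemma \ref{lem:mdprop}(\textit{e}), using that the $P_j$ are simple so the relevant $(n-2)$-dimensional mixed volume of faces is strictly positive. Finally, the approximation statement is exactly \cite[Theorem 2.4.15 and its corollaries]{Sch14} (strongly isomorphic simple polytopes are dense), combined with continuity of mixed volumes in the Hausdorff metric (Remark \ref{rem:c2}); I would simply cite this.

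I expect the main obstacle to be part (c): while nonnegativity of $L$ and the $L+D$ decomposition are fairly mechanical once the local structure of $(\tilde A\uh_P)_u$ is written out, proving \emph{irreducibility} cleanly --- i.e., that the neighbor graph on $\Omega$ is connected and that $L_{uu'}$ is \emph{strictly} positive for every adjacent pair --- requires invoking that the $P_j$ are simple and strongly isomorphic in an essential way, and it is the one spot where the combinatorics of polytope boundaries (rather than soft multilinearity arguments) genuinely enters. Everything else is a transcription of the smooth-case arguments of sections \ref{sec:2}--\ref{sec:3} into the finite-dimensional polytope dictionary.
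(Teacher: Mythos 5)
Your proposal is correct and follows essentially the same route as the paper, whose proof simply cites \cite[Lemma 2.4.13, the explicit expression in the proof of Lemma 5.1.5, and Theorem 2.4.15]{Sch14} together with connectedness of the facet graph for exactly the facts you reconstruct: openness of the strong-isomorphism class for (a), the explicit linear, symmetric expression with nonnegative off-diagonal entries supported on adjacent facet directions for (b)--(c), and density of simple strongly isomorphic polytopes for the approximation claim. One cosmetic slip: by \eqref{eq:polymvrep} and the definition of $\tilde A$ one has $\langle \uh_P,\tilde A\uh_Q\rangle=\V(P,Q,P_3,\ldots,P_n)$ without the extra factor $n$, which does not affect your symmetry argument.
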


\begin{proof}
Part (\textit{a}) follows from \cite[Lemma 
2.4.13]{Sch14}. Parts (\textit{b}) and (\textit{c}) may be read off from the 
explicit expression given in the proof of \cite[Lemma 5.1.5]{Sch14}; in 
particular, irreducibility follows as the facet graph of a polytope is 
connected (this standard fact follows by duality from \cite[Theorem 
15.5]{Bro83}).
That arbitrary bodies may be approximated by simple strongly 
isomorphic polytopes is \cite[Theorem 2.4.15]{Sch14}.
\end{proof}

In comparison with the smooth setting, part (\textit{a}) of this lemma is 
analogous to Corollary \ref{cor:c2}; $\tilde A$ is analogous to 
\eqref{eq:atilde}; and part (\textit{c}) corresponds to ellipticity.

It will be convenient to
extend mixed volumes linearly as follows: whenever $x=\uh_{Q}-\uh_{Q'}$
for polytopes $Q,Q'$ strongly isomorphic to $P_i$, we define 
$$
	\V(x,P_2,\ldots,P_n):=\V(Q,P_2,\ldots,P_n)-\V(Q',P_2,\ldots,P_n),
$$
and for $u\in\Omega$
\begin{align*}
	&\V(F(x,u),F(P_3,u),\ldots,F(P_n,u)) := \mbox{}
	\\& \qquad \V(F(Q,u),F(P_3,u),\ldots,F(P_n,u)) 
	-\V(F(Q',u),F(P_3,u),\ldots,F(P_n,u))
\end{align*}
(the latter notation is justified by Lemma \ref{lem:iso}(\textit{b})).
By Lemma \ref{lem:iso} and the representation \eqref{eq:polymvrep}, we can 
then write for any $x,y\in\mathbb{R}^{|\Omega|}$
\begin{align*}
	(\tilde Ax)_u &=
	\frac{1}{n}\V(F(x,u),F(P_3,u),\ldots,F(P_n,u)),
	\\
	\langle x,\tilde Ay\rangle &=
	\V(x,y,P_3,\ldots,P_n).
\end{align*}
We are now ready to prove the Alexandrov-Fenchel inequality for polytopes.

\begin{thm}
\label{thm:polyaf}
Let $P,P_3,\ldots,P_n$ be simple strongly isomorphic polytopes in 
$\mathbb{R}^n$ with common facet directions $\Omega\subset S^{n-1}$. Then 
for every $x\in\mathbb{R}^{|\Omega|}$
$$
	\V(x,P,P_3,\ldots,P_n)^2 \ge
	\V(x,x,P_3,\ldots,P_n)\V(P,P,P_3,\ldots,P_n).
$$
In particular, by the last part of Lemma \ref{lem:iso},
this implies Theorem \ref{thm:af}.
\end{thm}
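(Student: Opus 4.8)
The plan is to mimic the proof of Theorem~\ref{thm:af} from section~\ref{sec:3}, with the elliptic operator $\mathscr{A}$ replaced by the finite-dimensional matrix $\tilde A$ supplied by Lemma~\ref{lem:iso}. First I would reduce to the case where the body $P$ playing the role of the second argument is placed in a convenient normalized position. Since $\V(P,P,P_3,\ldots,P_n)>0$ (the $P_i$ have nonempty interior, so this is a genuine $n$-dimensional mixed volume, positive by Lemma~\ref{lem:mvprop}(\textit{d}) together with the simplicity/strong isomorphism hypotheses), we may rescale and translate $P$; in particular, after rescaling I can arrange that $\tilde A\uh_P=\uh_P$, i.e.\ that $\uh_P$ is an eigenvector of $\tilde A$ with eigenvalue $1$. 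Concretely, one replaces $\tilde A$ by the matrix $\mathscr{A}$ with entries $(\mathscr{A}x)_u := (\tilde Ax)_u \cdot h_P(u)/(\tilde A\uh_P)_u$ and correspondingly introduces the weighted inner product with weights $p_u := (\tilde A\uh_P)_u/h_P(u) > 0$, exactly as in section~\ref{sec:3}; then $\langle x,\mathscr{A}y\rangle_{\ell^2(p)}=\V(x,y,P_3,\ldots,P_n)$, $\mathscr{A}$ is self-adjoint on $\ell^2(p)$ by symmetry of mixed volumes, and $\mathscr{A}\uh_P=\uh_P$.

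Next I would establish that $\mathscr{A}$, like its unnormalized cousin $\tilde A$, has the Perron--Frobenius structure: by Lemma~\ref{lem:iso}(\textit{c}), $\tilde A = L+D$ with $L$ irreducible nonnegative and $D$ diagonal, and this structure is preserved under the positive diagonal conjugation/rescaling above, so $\mathscr{A}$ is likewise of the form "irreducible nonnegative plus diagonal''. Adding a large enough multiple of the identity makes $\mathscr{A}+cI$ an irreducible nonnegative matrix with strictly positive Perron eigenvector $\uh_P$; hence by the Perron--Frobenius theorem \cite[Theorem 1.4.4]{BR97} the eigenvalue $1$ is the largest eigenvalue of $\mathscr{A}$ and it is simple.

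Then comes the Bochner-type step, the analogue of Lemma~\ref{lem:lich}: I would apply Theorem~\ref{thm:a} for $(n-1)$-dimensional matrices to the faces. For each $u\in\Omega$, Theorem~\ref{thm:a} (applied to $A=D^2$-type data, here to the faces $F(x,u),F(P_3,u),\ldots,F(P_n,u)$, using that the face mixed volumes are $(n-1)$-dimensional mixed discriminants via the representation \eqref{eq:mvrep} in $u^\perp$) gives $\V(F(x,u),F(P_3,u),\ldots,F(P_n,u))^2 \ge \V(F(x,u),F(x,u),F(P_4,u),\ldots,F(P_n,u))\cdot\V(F(P_3,u),F(P_3,u),F(P_4,u),\ldots,F(P_n,u))$, and after dividing by the appropriate normalization this becomes $(\mathscr{A}x)_u^2\,p_u \ge \tfrac{1}{n}\V(F(x,u),F(x,u),F(P_4,u),\ldots,F(P_n,u))$. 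Summing over $u\in\Omega$ and using \eqref{eq:polymvrep} (with the roles of the first two arguments exchanged by symmetry of mixed volumes, exactly as in the proof of Lemma~\ref{lem:lich}) yields $\langle \mathscr{A}x,\mathscr{A}x\rangle_{\ell^2(p)} \ge \langle x,\mathscr{A}x\rangle_{\ell^2(p)}$ for all $x\in\mathbb{R}^{|\Omega|}$. Plugging in an eigenvector of $\mathscr{A}$ with eigenvalue $\lambda$ gives $\lambda^2\ge\lambda$, so $\lambda\ge 1$ or $\lambda\le 0$; combined with the Perron--Frobenius fact that $1$ is the simple top eigenvalue, the positive eigenspace of $\mathscr{A}$ is one-dimensional (spanned by $\uh_P$). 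Lemma~\ref{lem:hyperfull}($3\Rightarrow 1$) then gives $\langle x,\mathscr{A}\uh_P\rangle^2\ge\langle x,\mathscr{A}x\rangle\langle\uh_P,\mathscr{A}\uh_P\rangle$, which unwinds to the claimed inequality; undoing the normalization of $P$ gives the general statement, and the final sentence follows since arbitrary bodies are approximated by simple strongly isomorphic polytopes (last part of Lemma~\ref{lem:iso}) and mixed volumes are continuous under Hausdorff approximation (Remark~\ref{rem:c2}).

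The main obstacle I anticipate is not the spectral argument — which is a faithful transcription of section~\ref{sec:3} — but the bookkeeping in the Bochner step: one must check that the face quantities $\V(F(x,u),\ldots)$ really are governed by the $(n-1)$-dimensional mixed discriminant inequality Theorem~\ref{thm:a}, i.e.\ that the linear extension of mixed volumes to differences $x=\uh_Q-\uh_{Q'}$ is compatible with passing to faces (this is what Lemma~\ref{lem:iso}(\textit{b}) and the extended-mixed-volume definitions in the excerpt are for), and that the normalization factors $p_u$ match up so that the per-$u$ inequality sums cleanly to the operator inequality. A secondary subtlety is ensuring the strong-isomorphism hypothesis is preserved under the operations used (rescaling $P$, forming $\uh_Q-\uh_P$ via Lemma~\ref{lem:iso}(\textit{a})), so that all the face mixed volumes being compared are taken over the same index set $\Omega$; this is exactly the reason the "nice family'' framework was set up, so it should go through, but it is the place where care is needed.
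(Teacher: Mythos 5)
Your overall architecture (finite-dimensional operator, Perron--Frobenius, the $\lambda^2\ge\lambda$ trick, Lemma~\ref{lem:hyperfull}) is the right one, but there are two genuine gaps. First, the per-face inequality you invoke is not Theorem~\ref{thm:a}: the facets $F(x,u),F(P_3,u),\ldots,F(P_n,u)$ are polytopes (or formal differences of polytopes), they have no Hessians, and their $(n-1)$-dimensional mixed volumes are \emph{not} mixed discriminants ``via \eqref{eq:mvrep} in $u^\perp$'' --- that representation is only valid for $C^2_+$ bodies. The whole point of the polytope route is to avoid smooth bodies and elliptic operators; accordingly, the paper proves Theorem~\ref{thm:polyaf} by \emph{induction on the dimension}: the facets of simple strongly isomorphic polytopes are again simple and strongly isomorphic polytopes in $u^\perp$, so the needed inequality for the face quantities is exactly the induction hypothesis (the $(n-1)$-dimensional version of Theorem~\ref{thm:polyaf}, already stated for arbitrary vectors $x$, which is what one needs for $F(x,u)$), with base case $n=2$ from Brunn--Minkowski extended to general $x$ via Lemma~\ref{lem:iso}(\textit{a}). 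Your proposal has no induction and no base case, and the substitute you offer for the key step does not apply.

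Second, your normalization is the wrong one, and the Bochner step breaks with it. You normalize so that $\uh_P$ (the ``$L$''-type body) is the top eigenvector, i.e.\ $(\mathscr{A}x)_u = h_P(u)\V(F(x,u),F(P_3,u),\ldots,F(P_n,u))/\V(F(P,u),F(P_3,u),\ldots,F(P_n,u))$ and $p_u=\tfrac1n\V(F(P,u),F(P_3,u),\ldots,F(P_n,u))/h_P(u)$. Then $(\mathscr{A}x)_u^2p_u$ has denominator $\V(F(P,u),F(P_3,u),F(P_4,u),\ldots,F(P_n,u))$, which does \emph{not} cancel against the factor $\V(F(P_3,u),F(P_3,u),F(P_4,u),\ldots,F(P_n,u))$ produced by the face Alexandrov--Fenchel inequality, so your claimed bound $(\mathscr{A}x)_u^2p_u\ge\tfrac1n\V(F(x,u),F(x,u),F(P_4,u),\ldots,F(P_n,u))$ does not follow; moreover, even that right-hand side, summed over $u$ without an $h_{P_3}(u)$ weight, is not $\langle x,\mathscr{A}x\rangle_{\ell^2(p)}=\V(x,x,P_3,\ldots,P_n)=\tfrac1n\sum_u h_{P_3}(u)\V(F(x,u),F(x,u),F(P_4,u),\ldots,F(P_n,u))$. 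The paper normalizes by one of the \emph{reference} bodies, setting $(Ax)_u=h_{P_3}(u)\V(F(x,u),F(P_3,u),\ldots)/\V(F(P_3,u),F(P_3,u),\ldots)$ and $p_u=\tfrac1n\V(F(P_3,u),F(P_3,u),\ldots)/h_{P_3}(u)$, so that the denominator cancels exactly and the weighted sum reconstitutes $\V(P_3,x,x,P_4,\ldots,P_n)$ by symmetry; the concluding remarks (section~\ref{sec:alex}) point out explicitly that normalizing by the second-argument body is Alexandrov's choice and is incompatible with this Bochner step. With the $P_3$-normalization there is also no ``undoing of a normalization of $P$'' at the end: one simply notes $\langle\uh_P,A\uh_P\rangle=\V(P,P,P_3,\ldots,P_n)\ge0$ and applies Lemma~\ref{lem:hyper}.
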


\begin{proof}
The proof will proceed by induction on the dimension $n$.

For $n=2$, the Alexandrov-Fenchel inequality 
$\V(K,L)^2\ge\V(K,K)\,\V(L,L)$ follows easily from the Brunn-Minkowski 
theorem \cite[Theorem 7.2.1]{Sch14}. This implies the result when 
$x=\uh_Q$ is the support vector of a polytope strongly isomorphic to $P$.
The general case $x\in\mathbb{R}^{|\Omega|}$ now follows from 
Lemma \ref{lem:iso}(\textit{a}) as in Remark \ref{rem:generalf}.

We now proceed to the induction step; that is, we will assume the 
theorem is valid for polytopes in $\mathbb{R}^{n-1}$ with $n\ge 3$, and 
aim to conclude it is also valid for polytopes in $\mathbb{R}^n$. 
To this end, define the $|\Omega|$-dimensional matrix $A$ and
$p\in\mathbb{R}^{|\Omega|}$ as
\begin{align*}
	(Ax)_u &:= 
	\frac{h_{P_3}(u) \V(F(x,u),F(P_3,u),\ldots,F(P_n,u))}
	{\V(F(P_3,u),F(P_3,u),\ldots,F(P_n,u))}, \\
	p_u &:= 
	\frac{1}{n}\frac{\V(F(P_3,u),F(P_3,u),\ldots,F(P_n,u))}{h_{P_3}(u)}
\end{align*}
(as in section \ref{sec:3}, we assume without loss of generality
that $h_{P_3}>0$). By definition, $\V(x,y,P_3,\ldots,P_n)=\langle 
x,Ay\rangle_{\ell^2(p)}$. Thus, as mixed volumes are symmetric, $A$ 
is self-adjoint on $\ell^2(p)$. Moreover, $A$ was defined so that 
$A\uh_{P_3}=\uh_{P_3}$. By Lemma \ref{lem:iso}(\textit{c}), the 
Perron-Frobenius theorem \cite[Theorem 1.4.4]{BR97} (applied to $A+cI$ for 
$c$ sufficiently large) implies $A$ has largest eigenvalue $1$ and that
this is a simple eigenvalue.

Now note that the facets of simple strongly isomorphic polytopes 
with a given normal direction are simple (cf.\ \cite[Theorem 12.15]{Bro83} 
for this basic fact) and strongly isomorphic (by definition). 
Thus the induction hypothesis implies
\begin{align*}
	(Ax)_u^2p_u 
	&= 
	\frac{h_{P_3}(u)}{n}
	\frac{\V(F(x,u),F(P_3,u),\ldots,F(P_n,u))^2}
	{\V(F(P_3,u),F(P_3,u),\ldots,F(P_n,u))}	
	\\
	&\ge
	\frac{h_{P_3}(u)}{n}
	\V(F(x,u),F(x,u),F(P_4,u),\ldots,F(P_n,u)).
\end{align*}
Summing over $u$ and using \eqref{eq:polymvrep}
and symmetry of mixed volumes yields
$$
	\langle Ax,Ax\rangle_{\ell^2(p)}
	\ge
	\V(P_3,x,x,P_4,\ldots,P_n)
	=
	\langle x,Ax\rangle_{\ell^2(p)}.
$$
Choosing $x$ to be an eigenvector of $A$, we find that any eigenvalue
$\lambda$ of $A$ satisfies $\lambda^2\ge\lambda$, so $\lambda\ge 1$ or
$\lambda\le 0$. But as $1$ is the maximal eigenvalue of $A$ and as it is 
a simple eigenvalue,
the conclusion follows immediately from Lemma \ref{lem:hyper}.
\end{proof}

\section{Concluding remarks}
\label{sec:concl}

\subsection{Alexandrov's proof}
\label{sec:alex}

Alexandrov's proof of the Alexandrov-Fenchel inequality \cite{Ale38} is 
very different in spirit than the method used in section \ref{sec:3}. For 
sake of comparison, let us briefly sketch his approach.

Despite the evident similarity between Theorems \ref{thm:af} and 
\ref{thm:a}, the mixed discriminant inequality is not used in a direct 
manner in Alexandrov's proof. Rather, it is used to establish an 
apparently unrelated fact: that the kernel of $\mathscr{A}$ has dimension 
$n$ (it consists precisely of first-order spherical harmonics). Once this 
is known, one may establish the requisite spectral property of 
$\mathscr{A}$ by a homotopy method. For a special choice of bodies (e.g., 
as in section \ref{sec:bochner} below), an explicit computation shows that 
the positive eigenspace is one-dimensional. We now interpolate between 
these special bodies and the given bodies in Theorem \ref{thm:af}. If the 
dimension of the positive eigenspace were to increase, then an eigenvalue 
must cross from below zero to above zero. But then the kernel of the 
operator must have dimension larger than $n$ at the crossing point, which 
yields a contradiction.

In contrast, our method appears conceptually and technically simpler, as 
the mixed discriminant inequality yields the Alexandrov-Fenchel 
inequality directly by a one-line computation. In particular, we have no 
need to characterize any other properties of the operator in the proof 
(such as its kernel). Let us also note that our normalization of 
$\mathscr{A}$ is slightly different than the one employed by Alexandrov: 
Alexandrov defined the operator so that $h_L$, rather than $h_{C_1}$, is 
its top eigenvector. With this special choice, the final inequality 
follows directly without appealing to Lemma \ref{lem:hyper}. However, in 
our approach, the choice $h_{C_1}$ (or, equivalently, $h_{C_i}$ for some 
$i$) plays a special role in the proof of Lemma \ref{lem:lich}. By fully 
exploiting Lemma~\ref{lem:hyper} we gain significant flexibility, as is 
further illustrated in section \ref{sec:4}.

\subsection{Equality cases}

It is not hard to deduce from the proof of Lemma~\ref{lem:hyperfull} that 
equality $\langle x,Ay\rangle^2=\langle x,Ax\rangle\langle y,Ay\rangle$ 
holds when $\langle y,Ay\rangle>0$ if and only if 
$x-ay\in\mathop{\mathrm{ker}}A$ for some $a\in\mathbb{R}$. Thus 
Alexandrov's proof (cf.\ section \ref{sec:alex}), while somewhat 
circuitous, does provide additional information: it shows that equality 
holds in Theorem \ref{thm:af} for \emph{smooth} bodies if and only if 
$h_K-ah_L$ is a linear function, i.e., when $K$ and $L$ are homothetic. 
(This is false for nonsmooth bodies, for which the characterization of 
equality cases remains open; cf.\ \cite[section 7.6]{Sch14}.)

Let us briefly sketch how the equality cases can be deduced from our 
approach. Let $f\in\mathop{\mathrm{ker}}\mathscr{A}$. Then the inequality
in Lemma \ref{lem:lich} holds with equality, and thus all inequalities
in its proof must hold with equality. In particular, one has
equality in Theorem \ref{thm:a} with $A=D^2f$, $B=D^2h_{C_1}$, and
$M_i=D^2h_{C_{i+1}}$. It is known that equality holds in Theorem 
\ref{thm:a} when $B,M_i>0$ if and only if $A=\lambda B$ for some 
$\lambda\in\mathbb{R}$. Thus $D^2f-\lambda D^2h_{C_1}=0$ for some 
$\lambda:S^{n-1}\to\mathbb{R}$. But as $\mathscr{A}f=0$, we have
$$
	0 = \frac{\D(D^2f-\lambda D^2h_{C_1},D^2h_{C_1},\ldots,
	D^2h_{C_{n-2}})}{\D(D^2h_{C_1},D^2h_{C_1},\ldots,
        D^2h_{C_{n-2}})}=
	-\lambda.
$$
Thus we have shown that $D^2f=0$, so
$f$ must be a linear function.

Using similar reasoning, the abovementioned equality 
cases of Theorem \ref{thm:a} may be deduced from the proof given in 
section \ref{sec:4}. We can similarly recover the equality cases of 
Theorem \ref{thm:polyaf}. We omit the details in the interest of space.

\subsection{The Bochner method}
\label{sec:bochner}

The simple technique of this paper has its origin in the classical bound 
of Lichnerowicz on the spectral gap of the Laplacian on Riemannian 
manifolds with positive Ricci curvature \cite{Lic58}. This connection goes 
beyond an analogy between the proofs, as we will presently explain.

Let us briefly recall Lichnerowicz' argument. Let $M$ be an
$(n-1)$-dimensional compact Riemannian manifold. We denote by $\nabla_M$ 
the covariant derivative and by $\Delta_M$ the Laplacian. The basic 
observation of Lichnerowicz is that, by integrating the classical Bochner 
formula, one obtains the identity
(cf.\ \cite[Theorem 4.70]{GHL04})
\begin{align}
\nonumber
	\int_M (\Delta_M f)^2 &=
	\frac{n-1}{n-2}
	\int_M \mathrm{Ric}_M(\nabla_Mf,\nabla_Mf) \\ &\quad +
	\frac{1}{n-2}
	\int_M \bigg\{(n-1)\mathrm{Tr}[(\nabla_M^2f)^2]-
	\mathrm{Tr}[\nabla_M^2f]^2\bigg\}.
\label{eq:boch}
\end{align}
Note that the last term in this expression is always nonnegative by
Cauchy-Schwarz. If we specialize to the sphere $M=S^{n-1}$, the Ricci 
curvature tensor is given by $\mathrm{Ric}_{S^{n-1}}(X,X)=(n-2)\|X\|^2$,
and we obtain after integrating by parts
\begin{equation}
\label{eq:sqlapl}
	\int_{S^{n-1}} (\Delta_{S^{n-1}}f)^2\,d\omega
	\ge -(n-1)\int_{S^{n-1}} f\Delta_{S^{n-1}}f\,d\omega.
\end{equation}
Thus every eigenvalue $\lambda$ of $-\Delta_{S^{n-1}}$ (which is positive
semidefinite) must satisfy $\lambda^2\ge (n-1)\lambda$, that is,
$\lambda=0$ or $\lambda\ge n-1$. As noted by Lichnerowicz, this
argument applies to any Riemannian manifold
$M$ with $\mathrm{Ric}_M(X,X)\ge (n-2)\|X\|^2$.

The idea of Lichnerowicz to use an identity for $(\Delta_M f)^2$ to deduce 
spectral estimates for $\Delta_M$ forms the foundation for our proof of 
the Alexandrov-Fenchel inequality. However, the proof of 
\eqref{eq:sqlapl}, using the Bochner formula, is very different than the 
proof of Lemma \ref{lem:lich}. Remarkably, it turns out that not only the 
inequality \eqref{eq:sqlapl}, but even the Bochner identity 
\eqref{eq:boch} for $M=S^{n-1}$, is implicit in the proof of Lemma 
\ref{lem:lich}. Thus we may truly think of our method as a ``Bochner 
method''.

To recover \eqref{eq:boch} for $M=S^{n-1}$ from the proof of Lemma
\ref{lem:lich}, we consider the special case where 
$C_1=\cdots=C_{n-2}=B_2$ is the Euclidean ball. Then
$h_{B_2}=1$ and $D^2h_{B_2}=I$. Differentiating $\det(I+tA)$ 
with respect to $t$ and using \eqref{eq:defmd} yields
\begin{align*}
	&\D(I,\ldots,I) = \det(I) = 1,
	\\&
	\D(A,I,\ldots,I) = \frac{1}{n-1}\mathrm{Tr}[A], \\
	&\D(A,A,I,\ldots,I) = \frac{1}{(n-1)(n-2)}(
	\mathrm{Tr}[A]^2-\mathrm{Tr}[A^2]).
\end{align*}
Moreover, by differentiating the 1-homogeneous extension $\|x\| 
f(x/\|x\|)$ of $f$, we find that $D^2f=\nabla_{S^{n-1}}^2f+fI$ in terms of 
the covariant Hessian. In particular, we obtain in this special case
$\mathscr{A}f = \frac{1}{n-1}\Delta_{S^{n-1}}f+f$,
$d\mu = \frac{1}{n}d\omega$. We now
compute
\begingroup
\allowdisplaybreaks
\begin{align*}
	&\int (\Delta_{S^{n-1}}f)^2\,d\omega 
	+ (n-1)\int f\Delta_{S^{n-1}}f\,d\omega
	\\ &\quad=
	(n-1)^2\bigg(
	\int (\mathscr{A}f)^2\,d\omega 
	- \int f\mathscr{A}f\,d\omega\bigg)
	\\
	&\quad=
	(n-1)^2
	\int \{\D(D^2f,I,\ldots,I)^2-\D(D^2f,D^2f,I,\ldots,I)\}\,d\omega
	\\
	&\quad=
	\frac{1}{n-2}
	\int
	\{(n-1)\mathrm{Tr}[(\nabla_{S^{n-1}}^2f)^2]
	- \mathrm{Tr}[\nabla_{S^{n-1}}^2f]^2
	\}\,d\omega.
\end{align*}
\endgroup
Here the first equality follows by completing the square; the second
equality is a reformulation of the proof of Lemma \ref{lem:lich}; 
and the third equality uses the explicit expressions for mixed volumes and 
$D^2f$ given above. Thus we recovered \eqref{eq:boch} for $M=S^{n-1}$
as a special case of the proof of Lemma \ref{lem:lich}.

The connections hinted at here can be developed in far greater generality;
however, as the geometric approach is somewhat tangential to the theme of 
this paper, we omit further discussion. Related ideas, inspired by
complex geometry, were also obtained by D.\ Cordero-Erausquin and B.\ 
Klartag (personal communication).

\subsection*{Acknowledgment}

We are grateful to Joel Tropp for helpful comments and for pointing out 
some inaccuracies in an earlier version of this paper, and to a referee 
for detailed comments that have helped us significantly improve the 
presentation.

\bibliographystyle{abbrv}
\bibliography{ref}

\begin{thebibliography}{10}

\bibitem{Ale37}
A.~D. Alexandrov.
\newblock Zur {T}heorie der gemischten {V}olumina von konvexen {K}\"orpern
  {II}.
\newblock {\em Mat. Sbornik N.S.}, 2:1205--1238, 1937.

\bibitem{Ale38}
A.~D. Alexandrov.
\newblock Zur {T}heorie der gemischten {V}olumina von konvexen {K}\"orpern
  {IV}.
\newblock {\em Mat. Sbornik N.S.}, 3:227--251, 1938.

\bibitem{AFO14}
S.~Artstein-Avidan, D.~Florentin, and Y.~Ostrover.
\newblock Remarks about mixed discriminants and volumes.
\newblock {\em Commun. Contemp. Math.}, 16(2):1350031, 14, 2014.

\bibitem{AGM15}
S.~Artstein-Avidan, A.~Giannopoulos, and V.~D. Milman.
\newblock {\em Asymptotic geometric analysis. {P}art {I}}.
\newblock AMS, 2015.

\bibitem{BR97}
R.~Bapat and T.~Raghavan.
\newblock {\em Nonnegative matrices and applications}.
\newblock Cambridge, 1997.

\bibitem{BF87}
T.~Bonnesen and W.~Fenchel.
\newblock {\em Theory of convex bodies}.
\newblock BCS Associates, Moscow, ID, 1987.

\bibitem{Bro83}
A.~Br{\o}ndsted.
\newblock {\em An introduction to convex polytopes}.
\newblock Springer-Verlag, New York, 1983.

\bibitem{BZ88}
Y.~D. Burago and V.~A. Zalgaller.
\newblock {\em Geometric inequalities}.
\newblock Springer-Verlag, Berlin, 1988.

\bibitem{Fen36}
W.~Fenchel.
\newblock In\'egalit\'es quadratiques entre les volumes mixtes des corps
  convexes.
\newblock {\em C. R. Acad. Sci. Paris}, 203:647--650, 1936.

\bibitem{GHL04}
S.~Gallot, D.~Hulin, and J.~Lafontaine.
\newblock {\em Riemannian geometry}.
\newblock Springer, third edition, 2004.

\bibitem{GT01}
D.~Gilbarg and N.~S. Trudinger.
\newblock {\em Elliptic partial differential equations of second order}.
\newblock Springer, 2001.

\bibitem{Gro90}
M.~Gromov.
\newblock Convex sets and {K}\"{a}hler manifolds.
\newblock In {\em Advances in differential geometry and topology}, pages 1--38.
  World Sci. Publ., Teaneck, NJ, 1990.

\bibitem{Hil12}
D.~Hilbert.
\newblock {\em Grundz\"{u}ge einer allgemeinen {T}heorie der linearen
  {I}ntegralgleichungen}.
\newblock B. G. Teubner, 1912.

\bibitem{Lic58}
A.~Lichnerowicz.
\newblock {\em G\'{e}om\'{e}trie des groupes de transformations}.
\newblock Dunod, Paris, 1958.

\bibitem{Min11}
H.~Minkowski.
\newblock {\em Gesammelte Abhandlungen. Zweiter Band}.
\newblock B.G. Teubner, 1911.

\bibitem{Sch14}
R.~Schneider.
\newblock {\em Convex bodies: the {B}runn-{M}inkowski theory}.
\newblock Cambridge, expanded edition, 2014.

\bibitem{Wan18}
X.~Wang.
\newblock A remark on the {A}lexandrov-{F}enchel inequality.
\newblock {\em J. Funct. Anal.}, 274(7):2061--2088, 2018.

\end{thebibliography}

\end{document}